\newcommand{\tops}{\texorpdfstring}
\renewenvironment{proof}[1][\proofname]{\par
  \pushQED{\qed}%
  \normalfont \topsep0\p@\relax
  \trivlist
  \item[\hskip\labelsep\scshape
  #1\@addpunct{.}]\ignorespaces
}{%
  \popQED\endtrivlist\@endpefalse
}
\numberwithin{equation}{section} 
\newtheorem{thm}{Theorem}[section]
\newtheorem{lem}[thm]{Lemma}
\theoremstyle{definition}
\newtheorem{remark}[thm]{Remark}
\newtheorem*{prth1.1}{Proof of Theorem 1.1}
\newcommand{\N}{\mathbb N}
\newcommand{\R}{\mathbb R}
\renewcommand{\thefootnote}{\fnsymbol{footnote}} 
\newcommand{\ep}{\varepsilon}
\newcommand{\tmax}{T_{\mathrm{max}}}
\newcommand{\lp}[2]{\|#2\|_{L^{#1}(\Omega)}}
\newcommand{\Lom}[1]{L^{#1}(\Omega)}
\newcommand{\Ombar}{\overline{\Omega}}
\newcommand{\Om}{\Omega}
\newcommand{\ball}[2]{B_{#2}(#1)}
\newcommand{\balla}{\ball{x_0}{5d}}
\newcommand{\ballb}{\ball{x_0}{4d}}
\newcommand{\ballc}{\ball{x_0}{3d}}
\newcommand{\balld}{\ball{x_0}{2d}}
\newcommand{\balle}{\ball{x_0}{d}}
\newcommand{\io}{\int_\Omega}
\newcommand{\ol}{\overline}
\newcommand{\phip}{\varphi_p}
\newcommand{\kapp}{\kappa_0}
\newcommand{\diff}{\mathrm d}
\newcommand{\ds}{\,\diff s}
\newcommand{\hp}{\hphantom}
\newcommand{\pe}{\mathrel{\hp{=}}}
\newcommand{\mc}[1]{\mathcal{#1}}
\newcommand{\ur}[1]{\mathrm{#1}}
\newcommand{\ure}{\ur e}
\newcommand{\ddt}{\frac{\mathrm{d}}{\mathrm{d}t}}
\newcommand{\eps}{\varepsilon}
\newcommand{\intom}{\int_\Omega}
\newcommand{\defs}{\coloneqq}
\newcommand{\sfed}{\eqqcolon}
\newcommand{\f}[2]{\frac{#1}{#2}}
\newcommand{\normj}[2][]{\|#2\|_{#1}}
\newcommand{\Ktilde}{\widetilde{K}}
\newcommand{\nn}{\nonumber}
\author[1]{Mario~Fuest\footnote{e-mail: fuest@ifam.uni-hannover.de, ORCID: 0000-0002-8471-4451}}
\author[1]{Johannes~Lankeit\footnote{e-mail: lankeit@ifam.uni-hannover.de, ORCID: 0000-0002-2563-7759}}
\author[2]{Masaaki~Mizukami\footnote{e-mail: masaaki.mizukami.math@gmail.com, ORCID: 0000-0002-5496-5129}}
\affil[1]{Leibniz Universität Hannover, Institut für Angewandte Mathematik, \protect\\ Welfengarten 1, 30167 Hannover, Germany}
\affil[2]{Department of Mathematics, Faculty of Education, 
Kyoto University of Education, \protect\\ 1, Fujinomori, Fukakusa, Fushimi-ku, Kyoto 612-8522, Japan} 
\date{}
\title{Location of blow-up points in fully parabolic chemotaxis systems with spatially heterogeneous logistic source}
\begin{document}
\setkomafont{title}{\normalfont\Large}
\maketitle

\renewcommand{\thefootnote}{\fnsymbol{footnote}} 
\footnote[0]
    {2020 {\itshape Mathematics Subject Classification}\/. 
    35B44, 35K55, 92C17.
    }
\footnote[0]
    {\itshape Key words and phrases\/: 
    chemotaxis; logistic source; spatial heterogeneity; blow-up set; spatially local bounds. 
    }

\KOMAoptions{abstract=true}
\begin{abstract}
\noindent
We consider the fully parabolic, spatially heterogeneous chemotaxis-growth system 
\begin{align*}
  \begin{cases}
    u_t = \Delta u - \nabla\cdot(u\nabla v) + \kappa(x)u-\mu(x)u^2, \\
    v_t = \Delta v - v + u
  \end{cases}
\end{align*}
in bounded domains $\Omega\subset \mathbb{R}^2$ and show that the blow-up set is contained in the set of zeroes of $\mu$.
\end{abstract}


\section{Introduction}
Solutions to semilinear parabolic equations (say, in a domain $\Omega\subset ℝ^n$ with smooth boundary) may cease to exist at finite time $T$. In this case, they usually `blow up' in the sense that
\[
\limsup_{t\nearrow T} \|u(\cdot, t)\|_{L^\infty(\Omega)} = \infty.
\]
Investigations then may be concerned with the question \textit{where} such blow-up happens, i.e.\ with a characterization of the blow-up set
\[
\mathcal{B}=\{\,x\in \Ombar \mid \exists ((x_j,t_j))_{j \in \N} \subset \Ombar \times(0,T): (x_j, t_j)\to (x, T),\; |u(x_j,t_j)|\to \infty \text{ as } j\to\infty\,\},
\]
see e.g.\ \cite{friedman_mcleod,fujita_chen,giga_kohn,galaktionov_vazquez,velazquez_semilinear,fujishima_ishige} for semilinear parabolic equations or \cite{souplet_singlepoint} for systems.

The Keller--Segel equations of chemotaxis 
\begin{equation}\label{KS}
 u_t=Δu-∇\cdot(u∇v),\qquad v_t=Δv-v+u
\end{equation}
constitute a well-studied system (see e.g.\ the surveys \cite{BBTW,lanwin_survey}), whose ability to admit blow-up is not only of mathematical but also of biological interest and has endowed it with a firm position as prototype of spontaneous aggregation and thus structure formation. Its basic mechanisms -- production of a signal substance (of concentration $v$) and partially undirected (heat equation) and partially directed motion (taxis term, $-∇\cdot (u∇v)$) of cells (with density $u$) -- are fundamental throughout biology, and the system thus emerges -- possibly as submodel of more complex descriptions -- in a variety of contexts, \cite{hillen_painter_survey,painter_selforganisation}.

If \eqref{KS} is considered under no-flux boundary conditions in a bounded, spatially two-dimensional domain with smooth boundary and with sufficiently regular initial data $(u_0,v_0)$, it has local classical solutions -- which necessarily are global if $\io u_0<4π$, \cite{NSY}. If, however, the initial mass is larger, the solution may blow up; and this happens for a large set of initial data, \cite{herrero_velazquez,horstmann_wang,mizowin}. 
The first such solution found was the radially symmetric solution constructed in \cite{herrero_velazquez}, which aggregates towards $8πδ_0$ (with $δ_0$ denoting the Dirac measure centred at the origin); and, in fact, for radially symmetric solutions, the origin is the only possible blow-up point, i.e.\ $\mc B \subseteq \{0\}$, \cite{nagai_senba_suzuki}. If the Lyapunov functional $\io u\ln u - \io uv + \f12 \io |∇v|^2 + \f12\io v^2$ remains bounded from below, the points of $\mc B$ are isolated also in non-radial scenarios. 
At every isolated blow-up point $x_0$, the solution collapses into $mδ_{x_0}$ (plus an $L^1$-function) in the sense of weak-star convergence in the space of Radon measures, \cite{nagai_senba_suzuki},
and where $m=4π$ or $m=8π$, depending on whether $x_0\in ∂\Om$ or $x_0\in \Om$. 

While homogeneous Dirichlet boundary conditions for $v$ in a parabolic--elliptic simplification of \eqref{KS} can rule out boundary blow-up, see \cite{suzuki_exclusion}, without these modifications, blow-up in a boundary point is still possible for \eqref{KS}, as demonstrated in \cite{FuestLankeitCornersCollapseSimple2023} (for less smooth domains, like a quarter disk).
Further estimates of solutions near blow-up, including a nondegeneracy result for blow-up points analogous to \cite{giga_kohn}, can be found in \cite{mizoguchi_souplet}.  

If additional source terms of logistic type $+κu-μu^2$ are included in the first equation of \eqref{KS} -- for example, in order to account for population growth, \cite[Sec.~2.8]{hillen_painter_survey} --, the possibility of blow-up may be significantly impaired. 
This question is not completely settled in higher-dimensional cases, see \cite{lan_evsmooth,viglialoroVeryWeakGlobal2016,WinklerSolutionsParabolicKeller2023,YanFuestWhenKellerSegel2021,ding_lankeit} for the construction of weaker solutions and their eventual smoothness properties or \cite{WinklerBoundednessHigherdimensionalParabolic2010,XiangHowStrongLogistic2018,XiangChemotacticAggregationLogistic2018} for global existence theorems requiring largeness of $μ$, but \cite{WinklerBlowupHigherdimensionalChemotaxis2011, WinklerFinitetimeBlowupLowdimensional2018,FuestApproachingOptimalityBlowup2021} for some indications of possible unboundedness at least in parabolic--elliptic system variants or  \cite{WinklerEmergenceLargePopulation2017,wang_win_xiang_CVPDE} 
for large population densities even in the absence of blow-up. In $\Om\subset ℝ^2$, on the other hand, mere positivity of the constant $μ$ suffices to guarantee globally bounded solutions, \cite{OsakiEtAlExponentialAttractorChemotaxisgrowth2002}. 

The same does not automatically carry over to spatially inhomogeneous (i.e.\ non-constant) $κ$ and $μ$. Such inhomogeneity can be needed for more realistic inclusion of environmental conditions, for example in spatial ecology (and can have significant effects, as the results on (non-)coexistence in the competition model in  \cite{DockeryEtAlEvolutionSlowDispersal1998} show), and has been studied in a related chemotaxis model in, e.g., \cite{salako_shen_III}. (See also references therein.) The following thus is the system we will consider in the present article.

\begin{align}\label{P}
 \begin{cases}
u_t=\Delta u -\nabla \cdot (u\nabla v) + \kappa(x)u -\mu (x)u^2,  
         &x\in \Omega,\ t>0, 
 \\ 
  v_t = \Delta v - v + u, 
         &x\in \Omega,\ t>0,
 \\
 \partial_\nu u = \partial_\nu v = 0,
      & x\in \partial \Omega, \ t>0, 
 \\
 u (x,0) = u_0 (x), \ 
 v(x,0) = v_0 (x), 
 & x\in \Omega,
 \end{cases}
\end{align}
where $\kappa,\mu\in C^0(\ol{\Omega})$ with $\mu\ge 0$ in $\ol{\Omega}$ and where $u_0 \in C^0 (\ol{\Omega})$ and $v_0 \in W^{1,\infty}(\Omega)$ are nonnegative initial data. 

For this system, existence of solutions was shown in \cite{YanFuestWhenKellerSegel2021}, within a generalized concept of solvability (of $u$ being a `weak logarithmic supersolution' and a `mass subsolution' and $v$ being a weak solution; cf.\ \cite[Def.~2.1]{YanFuestWhenKellerSegel2021}) under some restrictions on the growth of $μ$, which for the quadratic exponent in \eqref{P} are fulfilled whenever $μ(x)\le |x|^{α}$ for some $α<2$.

On the other hand, if the equation for $v$ in \eqref{P} is replaced by an elliptic counterpart, blow-up can occur  \cite{FuestFinitetimeBlowupTwodimensional2020} (see also \cite{BlackEtAlRelaxedParameterConditions2021} for possibly nonlinear diffusivities), and again the question arises, \textit{where} this blow-up may take place. For the parabolic--elliptic variant of \eqref{P}, this question was answered in \cite{BlackEtAlPossiblePointsBlowup2023} with the conclusion that $\mc B \subseteq \mu^{-1}(\{0\})$, i.e.\ that only zeroes of $μ$ are possible points of blow-up. It is the aim of the present paper to prove a corresponding result for the fully parabolic analogue \eqref{P}:

\begin{thm}\label{mainthm1}
Let $\Omega\subset \mathbb{R}^2$ be a smooth, bounded domain, $\kappa,\mu \in C^0(\ol{\Omega})$ 
with $\mu\ge 0$, $0 \le u_0 \in C^0(\Ombar)$, $0 \le v_0 \in W^{1, \infty}(\Omega)$, $\tmax \in (0, \infty]$
and let $(u, v) \in \left( C^{0}(\Ombar \times [0, \tmax)) \cap C^{2, 1}(\Ombar \times (0, \tmax)) \right)^2$ be a nonnegative, classical solution of \eqref{P}.
\begin{enumerate}
  \item[(i)]
    Let $x_0 \in \ol{\Omega}$ with $\mu(x_0) >0$ and $T\in (0,\tmax]\cap (0,\infty)$. Then there exist an open neighbourhood $U$ of $x_0$ in $\Ombar$ and $C > 0$ such that
    \[
      \|u(\cdot,t)\|_{L^\infty(U)} \le C \quad \text{for all } t\in (0,T). 
    \]

  \item[(ii)]
    If $\tmax < \infty$, then the blow-up set 
    \[
    \mathcal{B} \defs \{\, x\in \ol{\Omega} \mid \exists ((x_j, t_j))_{j\in \mathbb{N}}\subset \ol{\Omega} \times (0,\tmax): (x_j, t_j) \to (x, \tmax),\; u(x_j,t_j)\to \infty \text{ as } j\to\infty\,\}
    \]
    is contained in $\{\,x\in \ol{\Omega} \mid \mu(x) =0\,\}$. 
\end{enumerate}
\end{thm}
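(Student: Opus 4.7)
Part (ii) is an immediate consequence of part (i). Indeed, if $\mu(x_0) > 0$ and $\tmax < \infty$, then (i) applied with $T = \tmax$ furnishes an open neighbourhood $U$ of $x_0$ in $\Ombar$ and $C > 0$ such that $u \le C$ on $U \times (0,\tmax)$; any sequence $((x_j,t_j))_{j \in \N}$ with $(x_j,t_j) \to (x_0,\tmax)$ must eventually satisfy $x_j \in U$, so $u(x_j,t_j)$ stays bounded and $x_0 \notin \mc B$. The substance of the theorem therefore lies in part (i).

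For (i), continuity of $\mu$ provides $d, \mu_0 > 0$ with $\mu \ge \mu_0$ on $\Ombar \cap \balla$. I would fix nested smooth cut-offs $\varphi_1 \prec \varphi_2 \prec \varphi_3 \prec \varphi_4$ with $\varphi_k \equiv 1$ on $\ball{x_0}{kd}$ and $\operatorname{supp} \varphi_k \subset \ball{x_0}{(k+1)d}$. Testing the $u$-equation with $1$ yields $\|u(\cdot,t)\|_{L^1(\Om)}$ uniformly bounded on $(0,T)$ and $\int_0^T \io \mu u^2 \dx \dt < \infty$, so in particular $u \in L^2(\Ombar \cap \ballb \times (0,T))$. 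Heat-semigroup estimates applied to $v_t - \Delta v + v = u$ with $u \in L^\infty((0,T); L^1(\Om))$ then yield $\nabla v \in L^\infty((0,T); L^q(\Om))$ for every $q \in [1,2)$, providing the seed regularity for the iteration.

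The core work is a bootstrap of local $L^p$-estimates for $u$. Testing the $u$-equation with $p \varphi_k^2 u^{p-1}$, integrating by parts, and using Young's inequality to split the chemotactic cross-terms produces a differential inequality of the form
\begin{equation*}
  \ddt \io \varphi_k^2 u^p \dx + \f{p\mu_0}{2} \io \varphi_k^2 u^{p+1} \dx \le C \io \varphi_k^2 |\nabla v|^{2(p+1)} \dx + C \int_{\operatorname{supp} \nabla \varphi_k} u^p \dx + \text{l.o.t.},
\end{equation*}
in which the friendly $\mu u^{p+1}$-term absorbs the top-order chemotactic contributions, provided a local $L^{2(p+1)}$-norm of $\nabla v$ is under control. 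To lift the regularity of $\nabla v$ one step higher, I would apply parabolic regularity to $\widetilde v \defs \varphi_{k+1} v$, which satisfies
\begin{equation*}
  \widetilde v_t - \Delta \widetilde v + \widetilde v = \varphi_{k+1} u - 2 \nabla \varphi_{k+1} \cdot \nabla v - v \Delta \varphi_{k+1},
\end{equation*}
the forcing of which is controlled by $u$ on $\operatorname{supp} \varphi_{k+1}$ together with the globally-known bounds on $v$ and $\nabla v$. Alternating the two procedures on successively tighter cut-offs produces local $L^p$-bounds on $u$ over $\balle$ for every finite $p$, whereupon a final localised Moser iteration (or a localised semigroup argument) promotes this to the $L^\infty$-bound claimed.

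The principal obstacle, absent in the parabolic--elliptic analogue \cite{BlackEtAlPossiblePointsBlowup2023}, is precisely the nonlocality of $v$: there, $-\Delta v + v = u$ transfers local regularity of $u$ directly to local regularity of $v$, whereas here $v$ depends on the full space-time history of $u$ through the heat kernel, so cut-off localisation necessarily introduces error terms of the form $\nabla \varphi \cdot \nabla v$ and $v \Delta \varphi$ on the annular region where $\nabla \varphi \not\equiv 0$. The technical heart of the argument is to organise the nested cut-offs and the sequence of exponents so that these error terms are always controlled by strictly coarser quantities already established, ensuring that each step of the bootstrap strictly improves the regularity and that the iteration terminates after finitely many rounds.
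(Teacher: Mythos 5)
Your reduction of (ii) to (i) and your seed estimates (the $L^1$ bound for $u$, the space-time bound $\int_0^T\io\mu u^2<\infty$, and the global bounds $\nabla v\in L^\infty((0,T);L^q(\Omega))$ for $q<2$) match the paper's Lemmas~\ref{lem;L1ForU}--\ref{lem;EstiForV}. The gap lies in the core bootstrap. Your differential inequality needs $\int_0^T\int|\nabla v|^{2(p+1)}$ to be finite on the support of the cut-off; the exponent $2(p+1)$ is forced by the Young splitting $u^p|\nabla v|^2\le\eps u^{p+1}+C|\nabla v|^{2(p+1)}$, since $u^{p+1}$ is the only absorbing term available. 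For every $p>1$ this exponent exceeds $4$, whereas the seed regularity, even after the localization via $\varphi_{k+1}v$ that you describe, only yields local space-time integrability of $\nabla v$ up to (but not including) order $4$: the forcing $\varphi_{k+1}u$ is merely in $L^{2}$ in space-time (from $\int_0^T\io\mu u^2<\infty$), so maximal regularity gives $\Delta(\varphi_{k+1} v)\in L^r$ only for $r<2$, and Gagliardo--Nirenberg interpolation against $\nabla v\in L^\infty_t L^q_x$, $q<2$, caps $\nabla v$ at $L^{4-\eta}$ locally (this is exactly the paper's Lemmas~\ref{lm:Delta_v_local}--\ref{lem;L4-epEstiForV}). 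Pushing $\nabla v$ past $L^4$ would require a local space-time $L^{p+1}$ bound on $u$ --- precisely what the step is supposed to produce. So the iteration cannot start; the scheme is circular rather than strictly improving, and "alternating on tighter cut-offs" does not resolve this.

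The paper breaks the circle by a smallness argument rather than a bootstrap. Integrating by parts once more turns the chemotactic term into $\int\varphi_p u^p\Delta v$, Young's inequality gives $\eps\int\varphi_p u^{p+1}+\frac{p-1}{p\eps^p}\int\varphi_p|\Delta v|^{p+1}$, and the $\Delta v$ term is fed back through maximal Sobolev regularity for $\varphi_p v$, yielding $\widetilde K(p+1)\int\int\varphi_p u^{p+1}+C$. This reproduces the unknown quantity, but with prefactor $\frac{(p-1)\ure^{p\kappa_0 T}\widetilde K(p+1)}{p\eps^p}$, which is made smaller than the damping coefficient $\frac{\mu(x_0)}{2}-4\eps$ by choosing $p$ close to $1$ --- and this requires knowing that the optimal maximal-regularity constant stays bounded as $p\to1$, which is the content of Lemma~\ref{lem;MaxSobReg} (proved via complex interpolation). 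No iteration over all finite $p$ is needed: a single small $p\in(1,2)$ suffices, because in two dimensions $u\in L^\infty_t L^p_x$ with $p>1$ already gives $\nabla v\in L^q$ locally for some $q>2$ and hence $L^\infty$ for $u$ by a semigroup argument. Your proposal is missing this absorption mechanism and the continuity-of-the-constant input that makes it quantitative.
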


\begin{remark}
 At least as soon as $κ$ and $μ$ are Hölder continuous, local existence of solutions of the regularity assumed in Theorem~\ref{mainthm1} is achievable by standard techniques, cf.\ the fixed point construction in \cite[Lemma~3.1]{BBTW} together with classical parabolic theory \cite[Thm.~IV.5.3]{LSU}; if they are not global in time, 
 \[
  \limsup_{t\nearrow \tmax} \normj[\Lom{∞}]{u(\cdot,t)}=∞, 
 \]
 and hence $\mathcal{B}\neq\emptyset$ in Theorem~\ref{mainthm1} (ii).
\end{remark}

In systems such as \eqref{P}, uniform-in-time bounds for $u$ in $L^p(\Omega)$  can be bootstrapped to $L^\infty$ bounds whenever $p > \frac{\dim\Omega}{2}$ (cf.\ \cite[Lemma~3.2]{BBTW}).
Indeed, the global existence proof in \cite{OsakiEtAlExponentialAttractorChemotaxisgrowth2002} for functions $\mu$ with a positive lower bound
first shows boundedness of $\intom u \ln u$ and then of $\intom u^2$, yielding the desired $L^\infty$ bound.

When $\mu$ is allowed to have zeros, however, the techniques from \cite{OsakiEtAlExponentialAttractorChemotaxisgrowth2002} are not directly applicable.
In Lemma~\ref{lem;DifIneqforU} and Lemma~\ref{lem;LpForU}, we instead consider the evolution of functionals $\intom \varphi u^p$ for some $\varphi$ localized near a point $x_0$ and small $p > 1$ --
similarly as in the parabolic--elliptic counterpart \cite{BlackEtAlPossiblePointsBlowup2023} of the present paper.
With bounds for this quantity at hand, we can then ensure boundedness of $u$ near $x_0$ (see Lemma~\ref{lem;LqForV} and Lemma~\ref{lem;LinftyForU}).

In contrast to \cite{BlackEtAlPossiblePointsBlowup2023}, the second equation in \eqref{P} is parabolic, meaning that replacing $-\Delta v$ by $u - v$ in the calculations is no longer possible.
Instead, we apply maximal Sobolev regularity results (see Lemma~\ref{lem;MaxSobReg} and Lemma~\ref{lem;MaxSobRegTestver}) to $\varphi v$.
When applied with $\varphi \equiv 1$, maximal Sobolev regularity is known to be useful in fully parabolic chemotaxis equations.
It has, for instance, been employed in \cite{CaoLargeTimeBehavior2017} in giving a short proof for the large-time behaviour of solutions to a system with a spatially homogeneous logistic source.
When $\varphi \not\equiv 1$, as is necessitated by our localization procedure, however, the right-hand side of the corresponding inequality contains a term involving $\nabla v$.
We control it by again relying on maximal Sobolev regularity in Lemma~\ref{lm:Delta_v_local} and Lemma~\ref{lem;L4-epEstiForV},
which in turn are based on the spatially local $L^2$-$L^2$ bound for $u$ and spatially global estimates for $v$ and $\nabla v$ obtained in Lemma~\ref{lem;L1ForU} and Lemma~\ref{lem;EstiForV}, respectively.

In order to absorb the resulting expressions by dissipative terms of the functional $\intom \varphi u^p$,
we need to ensure that $p-1$ is smaller than a certain constant (see the discussion directly before Lemma~\ref{lem;DifIneqforU}).
Obviously, the latter may not depend in an unfavourable way on $p$ and hence it is essential to trace the dependency of constants throughout our argument.
In particular, our approach requires that the optimal constant in the maximal Sobolev regularity estimate is bounded on compact subsets of $(1, \infty)^2$,
a fact we prove in Lemma~\ref{lem;MaxSobReg}.

\paragraph{Plan of the paper.}
Quite general maximal regularity results and the existence of suitable cutoff functions are collected in Section~\ref{sec2},
while Section~\ref{sec3} and Section~\ref{sec4} deal with spatially global and spatially local bounds of solutions to \eqref{P}, respectively.
The order of the lemmata in the latter is nearly reversed from the line of reasoning outlined above.
This is due to the fact that we often consider cutoff functions whose support is contained in the set where the cutoff function from the previous lemma equals $1$
and that we choose to begin with arguments where the corresponding cutoff function has the largest support.

\section{Preliminaries}\label{sec2}
As already mentioned, maximal regularity properties of the scalar parabolic equation
\begin{align}\label{prob:w}
  \begin{cases}
    w_t=\Delta w -w + f & \text{in $\Omega\times(0,T)$}, \\ 
    \partial_\nu w  = 0 & \text{on $\partial \Omega\times(0,T)$}, \\
    w(\cdot,0) = w_0    & \text{in $\Omega$},
 \end{cases}
\end{align}
form an essential ingredient in our proof.
To formulate the estimates, we first introduce the Banach spaces
\begin{align*}
  W_N^{2, q}(\Omega) &\defs \{\, \varphi \in W^{2, q}(\Omega) \mid \partial_\nu \varphi = 0 \text{ on $\partial \Omega$ in the sense of traces}\,\}
\intertext{and}
  \mc W^{p, q}(T, \Omega) &\defs W^{1,p}((0,T);L^q(\Omega))\cap L^p((0,T);W_N^{2,q}(\Omega)),
\end{align*}
where $p, q \in (1, \infty)$, $n \in \N$, $\Omega \subset \R^n$ is a smooth, bounded domain and $T \in (0, \infty]$.
We recall a result on unique solvability of \eqref{prob:w} for appropriate data.
\begin{lem}\label{lm:ex_w}
  Let $n \in \N$, $\Omega \subset \R^n$ be a smooth, bounded domain, $T \in (0, \infty]$ and $p, q \in (1, \infty)$.
  For all $w_0\in W_N^{2,q}(\Omega)$ and all $f \in L^p((0, T), L^q(\Omega))$,
  there exists a unique solution $w \in \mc W^{p, q}(T, \Omega)$ of \eqref{prob:w}.
\end{lem}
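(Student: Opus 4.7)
The plan is to recast \eqref{prob:w} as an abstract Cauchy problem on $L^q(\Omega)$. Letting $A \defs -\Delta + 1$ with domain $D(A) = W_N^{2,q}(\Omega)$, the equation becomes $w'(t) + Aw(t) = f(t)$, $w(0) = w_0$. Since $\Omega$ has smooth boundary, classical elliptic theory identifies $A$ as an invertible sectorial operator of angle strictly less than $\pi/2$; in particular, $-A$ generates an analytic $C_0$-semigroup $(\ure^{-tA})_{t \ge 0}$ on $L^q(\Omega)$ which, thanks to the $+1$ shift, is exponentially stable and enjoys maximal $L^p$-regularity on $(0,\infty)$.

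First I would decompose a candidate solution via Duhamel as $w = w_1 + w_2$ with
\[
  w_1(t) \defs \ure^{-tA} w_0 \qquad \text{and} \qquad w_2(t) \defs \int_0^t \ure^{-(t-s)A} f(s) \ds.
\]
Because $w_0 \in D(A)$, analyticity together with the bound $\|A \ure^{-tA} w_0\|_{L^q(\Omega)} \le C\ure^{-t}\|A w_0\|_{L^q(\Omega)}$ places $w_1$ in $\mc W^{p, q}(T, \Omega)$ for every $T \in (0, \infty]$. For the inhomogeneous part, maximal $L^p$-$L^q$ regularity supplies a constant $C > 0$ such that
\[
  \|\partial_t w_2\|_{L^p((0, T); L^q(\Omega))} + \|A w_2\|_{L^p((0, T); L^q(\Omega))} \le C \|f\|_{L^p((0, T); L^q(\Omega))},
\]
and standard elliptic $W^{2,q}$-regularity for the Neumann Laplacian then turns the control of $Aw_2$ into control of $w_2$ in $L^p((0,T); W_N^{2,q}(\Omega))$, so that $w_2 \in \mc W^{p, q}(T, \Omega)$ as well. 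Uniqueness follows from the observation that the difference of two solutions is a mild solution with $f \equiv 0$ and $w_0 = 0$, which must vanish identically by the semigroup representation.

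I do not expect a substantive obstacle, as the statement is essentially a reformulation of the classical maximal $L^p$-$L^q$ regularity of the Neumann realisation of $-\Delta + 1$ on a smooth bounded domain. The only subtle point is consistency of the initial trace with the natural time-trace space $(L^q(\Omega), D(A))_{1 - 1/p, p}$ of $\mc W^{p,q}(T, \Omega)$; this is ensured \emph{a fortiori} by the hypothesis $w_0 \in D(A)$ via the embedding $D(A) \hookrightarrow (L^q(\Omega), D(A))_{1 - 1/p, p}$, and the exponential stability of $(\ure^{-tA})_{t \ge 0}$ moreover guarantees that all estimates extend to the unbounded interval in the case $T = \infty$.
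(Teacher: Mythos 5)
Your argument is correct and rests on exactly the same pillar as the paper, namely the maximal $L^p$-$L^q$ regularity of the Neumann realisation of $\Delta-1$ on a smooth bounded domain; the paper simply cites \cite[Theorem~3.1]{HieberPrussHeatKernelsMaximal1997} for this, whereas you spell out the standard Duhamel decomposition and trace-space considerations that make the citation applicable. No gap.
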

\begin{proof}
  This follows from \cite[Theorem~3.1]{HieberPrussHeatKernelsMaximal1997}, for instance.
\end{proof}
 
We next state a maximal regularity result for \eqref{prob:w} in $L^p$-$L^q$ spaces, where the optimal constant $K(p, q)$ depends on $(p, q)$.
For its key application in the present paper
we will need that $K$ is bounded on compact subsets of $(1, \infty)^2$ so that for instance $(p-1)K(p+1,p+1) \to 0$ as $p \to 1$, cf.\ the application of Lemma~\ref{lem;MaxSobRegTestver} in \eqref{choices} that the proof of Lemma~\ref{lem;LpForU} relies on.
A short argument based on complex interpolation shows that $K$ is actually continuous.
\begin{lem}\label{lem;MaxSobReg}
  Let $n \in \N$, $\Omega \subset \R^n$ be a smooth, bounded domain and $T \in (0, \infty]$.
  For $p, q \in (1, \infty)$, denote by $\mc K(p, q)$ the set of all numbers $\widehat K(p, q) > 0$ with the following property:
  For all $f\in L^p((0, T); L^q(\Omega))$, 
  the unique solution $w\in \mc W^{p, q}(T, \Omega)$ of \eqref{prob:w} with $w_0 \equiv 0$ given by Lemma~\ref{lm:ex_w} fulfills
  \begin{align}\label{eq:MaxSobReg:ineq}
    \int_0^t \left( \int_\Omega |w|^q \right)^\frac pq + \int_0^t \left( \int_\Omega |w_t|^q \right)^\frac pq + \int_0^t \left( \int_\Omega |\Delta w|^q \right)^\frac pq
  \le
    \big(\widehat K(p,q)\big)^p \int_0^t \left( \int_\Omega |f|^q \right)^\frac pq
  \end{align}
  for all $t \in (0, T]$.
  Then $\mc K(p, q)$ is nonempty and the mapping $(1, \infty)^2 \ni (p, q) \mapsto K(p, q) \defs \inf \mc K(p, q)$ is continuous.
\end{lem}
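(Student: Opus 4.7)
The plan is to split the argument into two parts: to show that $\mathcal{K}(p,q)$ is nonempty for every $(p,q)\in(1,\infty)^2$, and then to upgrade this pointwise bound to continuity of $K$ on $(1,\infty)^2$ via complex interpolation. The second step is the substantive one; the hint in the paper already identifies it as the crux.

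For nonemptiness, I would invoke the Hieber--Pr\"uss maximal regularity result already cited in Lemma~\ref{lm:ex_w}. The Neumann realization of $-\Delta+I$ on $L^q(\Omega)$ is sectorial with strictly positive lower spectral bound and generates a bounded analytic semigroup, so maximal $L^p$-regularity supplies a constant $C(p,q)$ with
$$
  \|w_t\|_{L^p((0,t);L^q(\Omega))}+\|\Delta w\|_{L^p((0,t);L^q(\Omega))}\le C(p,q)\,\|f\|_{L^p((0,t);L^q(\Omega))}
$$
for every $t\in(0,T]$, independently of $t$. The corresponding bound on $\|w\|_{L^p(L^q)}$ is easier: the Duhamel representation $w(\cdot,t)=\int_0^t \mathrm e^{(t-s)(\Delta_N-I)}f(\cdot,s)\,\mathrm ds$, the decay bound $\|\mathrm e^{s(\Delta_N-I)}\|_{L^q\to L^q}\le \mathrm e^{-s}$, and Young's convolution inequality in time yield $\|w\|_{L^p(L^q)}\le \|f\|_{L^p(L^q)}$. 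Adding both contributions identifies an admissible $\widehat K(p,q)$, so $\mathcal{K}(p,q)\neq\emptyset$.

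For continuity, I would recognise $K(p,q)$ as an operator norm and interpolate. Writing $X_{p,q}\defs L^p((0,T);L^q(\Omega))$, the linear solution map $S_{p,q}\colon f\mapsto (w,w_t,\Delta w)$ is a bounded operator from $X_{p,q}$ into the direct sum $Y_{p,q}\defs X_{p,q}\oplus X_{p,q}\oplus X_{p,q}$ equipped with the $\ell^p$-norm, i.e.\ $\|(g_1,g_2,g_3)\|_{Y_{p,q}}^p=\|g_1\|_{X_{p,q}}^p+\|g_2\|_{X_{p,q}}^p+\|g_3\|_{X_{p,q}}^p$. By construction $\|S_{p,q}\|=K(p,q)$. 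For two parameter pairs $(p_0,q_0),(p_1,q_1)\in(1,\infty)^2$, $\theta\in(0,1)$ and $1/p_\theta\defs(1-\theta)/p_0+\theta/p_1$, $1/q_\theta\defs(1-\theta)/q_0+\theta/q_1$, the classical complex interpolation identity for mixed-norm Lebesgue spaces, $[X_{p_0,q_0},X_{p_1,q_1}]_\theta=X_{p_\theta,q_\theta}$, combined with the analogous identity for finite-dimensional $\ell^p$-direct sums yields $[Y_{p_0,q_0},Y_{p_1,q_1}]_\theta=Y_{p_\theta,q_\theta}$. Calder\'on's interpolation theorem then gives
$$
  K(p_\theta,q_\theta)\le K(p_0,q_0)^{1-\theta}\,K(p_1,q_1)^{\theta},
$$
so $(1/p,1/q)\mapsto\log K(p,q)$ is convex on the open convex set $(0,1)^2$. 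Convex functions on open convex subsets of $\mathbb{R}^2$ are continuous, and composing with the homeomorphism $(p,q)\mapsto(1/p,1/q)$ from $(1,\infty)^2$ onto $(0,1)^2$ delivers continuity of $K$ on $(1,\infty)^2$.

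The main obstacle is the interpolation identity $[X_{p_0,q_0},X_{p_1,q_1}]_\theta=X_{p_\theta,q_\theta}$, where inner and outer exponents vary simultaneously; this is classical but requires appealing to results on complex interpolation of vector-valued Lebesgue spaces in which the target Banach space is itself an interpolation scale. Once this is secured, the remaining steps -- the identification of $K$ with $\|S_{p,q}\|$, Calder\'on's theorem, and the elementary continuity of convex functions -- are formal.
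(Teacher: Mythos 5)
Your proposal is correct and follows essentially the same route as the paper: nonemptiness via the Hieber--Pr\"uss maximal $L^p$-$L^q$ regularity theorem (the paper cites \cite[Theorem~3.1]{HieberPrussHeatKernelsMaximal1997} directly for all three terms, after reducing to $t=T=\infty$), and continuity by identifying $K(p,q)$ with the operator norm of $f\mapsto(w,w_t,\Delta w)$, interpolating the scale $L^p((0,T);L^q(\Omega))$ complexly to get log-convexity of $K$ in $(1/p,1/q)$, and invoking continuity of convex functions on open sets. Your explicit attention to the $\ell^p$-norm on the threefold direct sum and the separate Duhamel bound for $\|w\|_{L^p(L^q)}$ are slightly more detailed than the paper's write-up but not a different argument.
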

\begin{proof}  
  As the left-hand side in \eqref{eq:MaxSobReg:ineq} is increasing in $t$ and $f$ can simply be extended by $0$ if necessary,
  it is sufficient to consider the case $t=T= \infty$ in seeing that $\mc K(p, q) \neq \emptyset$.
  This is shown by \cite[Theorem~3.1]{HieberPrussHeatKernelsMaximal1997} which asserts that \eqref{eq:MaxSobReg:ineq} holds for all $p, q \in (1, \infty)$ and some $\widehat K(p, q) > 0$.

  If, for arbitrary $T \in (0, \infty]$, we abbreviate $X^{p,q} \defs L^p((0, T); L^q(\Omega))$, let $\iota \colon \mc W^{p, q}(T, \Omega) \to (X^{p,q})^3$ denote the isometry $\varphi \mapsto (\varphi, \varphi_t, \Delta \varphi)$ and let $A$ be the (linear) operator mapping $f$ to the unique solution $w$ of \eqref{prob:w} given by Lemma~\ref{lm:ex_w}, then \eqref{eq:MaxSobReg:ineq} thus for all $t \in (0, T)$ holding with the choice $\widehat K(p, q) = K(p, q)$ shows that $\normj{\iota\circ A}=K(p,q)$ is the norm of $\iota\circ A\colon X^{p,q}\to (X^{p,q})^3$ for any $p,q\in(1,\infty)$. 
  Since $X^{p_{Θ},q_{Θ}}$ coincides with the interpolation space $[X^{p_0,q_0},X^{p_1,q_1}]_{Θ}$  for $p_0, p_1, p_\theta, q_0, q_1, q_\theta \in (1, \infty)$ and $\theta \in (0, 1)$
  with $\frac{1}{p_\theta} = \frac{1-\theta}{p_0} + \frac{\theta}{p_1}$ and $\frac{1}{q_\theta} = \frac{1-\theta}{q_0} + \frac{\theta}{q_1}$ (cf.\ \cite[Theorem~1.18.4]{TriebelInterpolationTheoryFunction1978}), complex interpolation applied to $\iota \circ A$ yields
  $K(p_\theta, q_\theta) \le (K(p_0, q_0))^{1-\theta} (K(p_1, q_1))^{\theta}$ (cf.\ \cite[Theorem~1.9.3 (a), Definition~1.2.2.2]{TriebelInterpolationTheoryFunction1978}).  
  That is, $\Phi \colon (0, 1)^2 \to \mathbb R$, $(\tilde p, \tilde q) \mapsto \ln K(\frac1{\tilde p}, \frac{1}{\tilde q})$ is convex.
  Since $(0, 1)^2$ is open, $\Phi$ is continuous and so is $K = {\exp} \circ \Phi \circ ((p, q) \mapsto (\frac1p, \frac1q))$.
\end{proof}

As a straightforward consequence of Lemma~\ref{lem;MaxSobReg}, we obtain an $L^p$-$L^p$ bound for $\varphi \Delta w$ in terms of the data and the cutoff function $\varphi$.
In particular, the following lemma is applicable in situations where $L^p$-$L^p$ bounds are available for $\varphi f$ but not for $f$.
\begin{lem}\label{lem;MaxSobRegTestver}
  Let $n \in \N$, $\Omega \subset \R^n$ be a smooth, bounded domain and $T \in (0, \infty]$.
  There exists a continuous function $\widetilde K : (1, \infty) \to (0, \infty)$
  such that for all $w_0 \in W_N^{2,p}(\Omega)$ and all $f\in L^p(\Omega \times (0, T))$,
  the solution $w\in \mc W^{p, p}(T, \Omega)$ of \eqref{prob:w} given by Lemma~\ref{lm:ex_w} satisfies  
  \begin{align*}
   \int_0^t \int_\Omega |\varphi \Delta w|^p
  &\le 
    \widetilde{K}(p) \left\{ \|\varphi w_0\|_{W^{2,p}(\Omega)}^p
    + \int_0^t \int_\Omega \Big(|\nabla \varphi\cdot \nabla w|^p + |w \Delta \varphi|^p + | \varphi f|^p\Big)  \right\}
  \end{align*}
  for all $\varphi \in C^2(\Ombar)$ with $\partial_\nu \varphi = 0$ on $\partial \Omega$, all $t \in (0, T)$ and all $p \in (1, \infty)$.
\end{lem}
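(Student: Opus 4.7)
The guiding idea is to rewrite the desired estimate for $\varphi\Delta w$ as a maximal regularity estimate for $\tilde w\defs \varphi w$. A direct computation gives
\begin{align*}
  \tilde w_t-\Delta\tilde w+\tilde w
  = \varphi f - 2\nabla\varphi\cdot\nabla w - w\Delta\varphi \sfed g
\end{align*}
in $\Omega\times(0,T)$, with $\tilde w(\cdot,0)=\varphi w_0\in W^{2,p}_N(\Omega)$ (the boundary condition is preserved because $\partial_\nu\varphi=\partial_\nu w=\partial_\nu w_0=0$). Moreover $g\in L^p(\Omega\times(0,T))$ since $w\in\mc W^{p,p}(T,\Omega)$ and $\varphi\in C^2(\overline\Omega)$. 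Finally I will invoke the pointwise identity
\begin{align*}
  \varphi\Delta w=\Delta\tilde w-2\nabla\varphi\cdot\nabla w-w\Delta\varphi
\end{align*}
to transfer bounds on $\Delta\tilde w$ into bounds on $\varphi\Delta w$.

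To estimate $\Delta\tilde w$ I will split $\tilde w=\tilde w_1+\tilde w_2$, where $\tilde w_1$ solves \eqref{prob:w} with data $(\varphi w_0,0)$ and $\tilde w_2$ solves it with data $(0,g)$; both lie in $\mc W^{p,p}(T,\Omega)$ by Lemma~\ref{lm:ex_w}, and uniqueness gives the decomposition. For $\tilde w_2$, Lemma~\ref{lem;MaxSobReg} applied with $q=p$ yields
\begin{align*}
  \int_0^t\!\!\io|\Delta\tilde w_2|^p
  \le K(p,p)^p \int_0^t\!\!\io|g|^p
  \le 3^{p-1}\,K(p,p)^p \int_0^t\!\!\io\bigl(|\varphi f|^p+2^p|\nabla\varphi\cdot\nabla w|^p+|w\Delta\varphi|^p\bigr)
\end{align*}
for every $t\in(0,T)$, which is exactly the structure required in the right-hand side of the claim.

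For the homogeneous piece $\tilde w_1$, I will use that on $W^{2,p}_N(\Omega)$ the Neumann Laplacian commutes with its heat semigroup $(e^{s\Delta})_{s\ge0}$ and that the latter is contractive on $L^p(\Omega)$. Writing $\tilde w_1(t)=e^{-t}e^{t\Delta}(\varphi w_0)$ one gets $\|\Delta\tilde w_1(t)\|_{L^p(\Omega)}\le e^{-t}\|\Delta(\varphi w_0)\|_{L^p(\Omega)}$, so that
\begin{align*}
  \int_0^t\!\!\io|\Delta\tilde w_1|^p \le \tfrac1p\|\varphi w_0\|_{W^{2,p}(\Omega)}^p
\end{align*}
uniformly in $t\in(0,T]\subseteq(0,\infty]$. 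Combining the two estimates with the identity $\varphi\Delta w=\Delta\tilde w-2\nabla\varphi\cdot\nabla w-w\Delta\varphi$ via the elementary inequality $(a_1+\dots+a_k)^p\le k^{p-1}(a_1^p+\dots+a_k^p)$ produces the stated bound. The resulting constant has the form $\widetilde K(p)=C_p\bigl(1+K(p,p)^p\bigr)$ for some continuous $C_p>0$; continuity of $\widetilde K$ in $p\in(1,\infty)$ therefore follows from the continuity of $K$ asserted in Lemma~\ref{lem;MaxSobReg}. The only real subtlety is to verify that the semigroup bound for $\tilde w_1$ is genuinely uniform in $T$ (so that the case $T=\infty$ is covered) and that the split solutions both lie in the space on which Lemma~\ref{lem;MaxSobReg} operates; both hinge on $\varphi w_0\in W^{2,p}_N(\Omega)$, which was checked at the start.
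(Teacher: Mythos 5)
Your argument is correct and follows the same basic route as the paper: view $\varphi w$ as a solution of the auxiliary problem \eqref{prob:w} with source $g = \varphi f - 2\nabla\varphi\cdot\nabla w - w\Delta\varphi$ and feed it into Lemma~\ref{lem;MaxSobReg}, then recover $\varphi\Delta w$ from $\Delta(\varphi w)$ via the product rule. The only genuine difference lies in how the nonzero initial datum $\varphi w_0$ is removed. You split off the homogeneous part $\tilde w_1(t)=\ure^{-t}\ure^{t\Delta}(\varphi w_0)$ and control $\Delta\tilde w_1$ using $L^p$-contractivity of the Neumann heat semigroup and its commutation with $\Delta$ on $W^{2,p}_N(\Omega)$; the paper instead subtracts the explicit function $\ure^{-t}\varphi w_0$, which yields zero initial data at the cost of the additional source term $\ure^{-t}\Delta(\varphi w_0)$, and thereby avoids any semigroup input. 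Both variants produce a constant of the form $C_p\bigl(1+K(p,p)^p\bigr)$ whose continuity in $p$ follows from Lemma~\ref{lem;MaxSobReg}. Two harmless points you should still tidy up: the bound $\|\Delta(\varphi w_0)\|_{L^p(\Omega)}^p\le\|\varphi w_0\|_{W^{2,p}(\Omega)}^p$ holds only up to a dimensional factor such as $n^{p-1}$ (continuous in $p$, hence absorbable into $\widetilde K$), and you should record explicitly that $\tilde w_1\in\mc W^{p,p}(T,\Omega)$ even for $T=\infty$ (which follows from the exponential decay of $\|\tilde w_1(t)\|_{W^{2,p}(\Omega)}$), so that uniqueness in Lemma~\ref{lm:ex_w} indeed identifies $\tilde w_2=\varphi w-\tilde w_1$ with the zero-initial-data solution to which Lemma~\ref{lem;MaxSobReg} applies.
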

\begin{proof}
  Since for any $\varphi \in C^2(\Ombar)$ with $\partial_\nu \varphi = 0$ on $\partial \Omega$, 
  \begin{align*}
    \Delta (\varphi w - \ure^{-t}\varphi w_0) = \varphi \Delta w + 2 \nabla \varphi \cdot \nabla w + w\Delta \varphi - \ure^{-t} \Delta (\varphi w_0) \qquad \text{in $\Omega \times (0, T)$}
  \end{align*}
  and $-2 \nabla \varphi\cdot \nabla w - w \Delta \varphi + \ure^{-t}\Delta(\varphi w_0)  + \varphi f \in L^p(\Omega \times (0, T))$,   the function $z \defs (w - \ure^{-t}w_0)φ$ is the unique solution of
  \begin{align*}
    \begin{cases}
      z_t=\Delta z -z -2 \nabla \varphi\cdot \nabla w - w \Delta \varphi + \ure^{-t}\Delta(\varphi w_0) + \varphi f & \text{in $\Omega\times(0,T)$}, \\
      \partial_\nu z  = 0                                                                                                & \text{on $\partial \Omega\times(0,T)$}, \\
      z(\cdot,0) = 0                                                                                                     & \text{in $\Omega$}
   \end{cases}
  \end{align*}
  in the sense of Lemma~\ref{lm:ex_w}.
  Moreover, we see from Lemma~\ref{lem;MaxSobReg} that (with $K$ as in that lemma)
  \begin{align*}
   \int_0^t \int_\Omega |\varphi \Delta w|^p
   & = \int_0^t \int_\Omega \Big|\Delta (\varphi (w-\ure^{-s}w_0))  -2\nabla \varphi\cdot \nabla w - w \Delta \varphi +\ure^{-s}Δ(φw_0)\Big|^p\ds
   \\
   & \le 
   2^{p-1} \int_0^t \int_\Omega |\Delta z|^p + 
   2^{p-1} \int_0^t \int_\Omega \Big| -2\nabla \varphi\cdot \nabla w -w \Delta \varphi +\ure^{-s}Δ(φw_0)\Big|^p\ds
   \\
  &\le 2^{p-1} K^p(p, p)
  \int_0^t \int_\Omega \Big|-2 \nabla \varphi\cdot \nabla w - w \Delta \varphi + \ure^{-s}\Delta(\varphi w_0) + \varphi f\Big|^p \ds
  \\
  &\pe +\, 6^{p-1} \int_0^t \int_\Omega \left(|2\nabla \varphi\cdot \nabla w|^p + |w \Delta \varphi|^p +\ure^{-ps}|Δ(φw_0)|^p\right)\ds
  \\
  &\le 
    \widetilde{K}(p) \left\{ \|\varphi w_0\|_{W^{2,p}(\Omega)}^p + \int_0^t \int_\Omega \Big( |\nabla \varphi\cdot \nabla w|^p  +|w \Delta \varphi|^p +  |\varphi f|^p \Big)  \right\}
  \end{align*}
  holds for all $\varphi \in C^2(\Ombar)$ with $\partial_\nu \varphi = 0$ on $\partial \Omega$, all $t \in (0, T)$ and all $p \in (1, \infty)$,
  where $\widetilde{K}(p) \defs (8^{p-1}K^p(p,p)+6^{p-1})2^p$. 
  Continuity of $\widetilde K$ follows from the continuity of $K$ asserted by Lemma~\ref{lem;MaxSobReg}.
\end{proof}

Inter alia in order to warrant applicability of Lemma~\ref{lem;MaxSobRegTestver}, we need to ensure that there are cutoff functions whose normal derivatives vanish at the boundary.
Having in mind that the point $x_0$ in Theorem~\ref{mainthm1}~(i) may lie on the boundary,
we cannot confine ourselves to cutoff functions supported in the interior of $\Omega$,
which would not only trivially fulfill homogeneous Neumann boundary conditions but would also be easily obtained by a mollification argument.
Instead, we recall that cutoff functions suitable for our purposes have been constructed in \cite{BlackEtAlPossiblePointsBlowup2023}.
\begin{lem}\label{lem;CutOffF}
Let $\Omega \subset \mathbb{R}^2$ be a smooth, bounded domain and $x_0\in \Ombar$.
Then there is a relatively open set $W\subset \Ombar$ containing $x_0$ such that 
whenever $A\subset W$ is compact, $V$ is a open neighbourhood of $A$ in $W$ and $\eta\in (0,\frac 12)$,
then there exist $\varphi\in C^2(\Ombar)$ with $\varphi^\eta \in C^2(\Ombar)$ and $C_\varphi>0$ such that 
\begin{align}\label{Condi;testfunction}
  0\le \varphi \le 1 \text{ in } \Ombar,
\quad 
  \varphi = 0 \text{ in } \Ombar \setminus V,
\quad 
  \varphi =1   \text{ in } A,
\quad 
  \partial_\nu \varphi = 0 \text{ on } \partial \Omega
\end{align}
and 
\begin{align}\label{eq:cutoff:D_varphi_est}
  |\nabla \varphi| \le C_\varphi \varphi^{1-\eta} \quad \text{and} \quad  |\Delta \varphi|\le C_\varphi \varphi^{1-2\eta} \quad  \text{in } \Ombar
\end{align}
\end{lem}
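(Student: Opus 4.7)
The plan is to build $\varphi$ as a power $\psi^{\alpha}$ of a smooth bump $\psi$, with the exponent $\alpha = 2/\eta$. This single choice simultaneously yields the polynomial-vanishing estimates and the regularity $\varphi^\eta \in C^2(\Ombar)$: from $\nabla(\psi^\alpha) = \alpha\psi^{\alpha-1}\nabla\psi$ and $\Delta(\psi^\alpha) = \alpha(\alpha-1)\psi^{\alpha-2}|\nabla\psi|^2 + \alpha\psi^{\alpha-1}\Delta\psi$ one obtains $|\nabla\varphi|+|\Delta\varphi| \le C_\varphi \psi^{\alpha-2}$, and since $\alpha\eta = 2$ we have $\psi^{\alpha-2} = \psi^{\alpha(1-2\eta)} = \varphi^{1-2\eta}$ (and a fortiori $\psi^{\alpha-1} \le \varphi^{1-\eta}$), while $\varphi^\eta = \psi^{2} \in C^2(\Ombar)$. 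It is therefore enough to produce, for the given $x_0 \in \Ombar$, a function $\psi \in C^\infty(\Ombar)$ with $0 \le \psi \le 1$, $\psi = 0$ on $\Ombar \setminus V$, $\psi = 1$ on $A$ and $\partial_\nu \psi = 0$ on $\partial\Omega$.

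For an interior point $x_0 \in \Omega$, choosing $W$ to be a small open ball around $x_0$ whose closure is contained in $\Omega$, a standard mollification of the indicator of an intermediate set furnishes a $\psi \in C_c^\infty(V)$ with $\psi \equiv 1$ on a neighbourhood of $A$; the Neumann condition then holds trivially because $\psi$ vanishes in a neighbourhood of $\partial\Omega$.

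The boundary case $x_0 \in \partial\Omega$ is the main obstacle, since now $\varphi$ cannot be compactly supported inside $\Omega$ and the condition $\partial_\nu\varphi = 0$ on $\partial\Omega$ has real content. I would exploit smoothness of $\partial\Omega$ to pick a $C^\infty$-diffeomorphism $\Phi$ mapping a relatively open neighbourhood $W \subset \Ombar$ of $x_0$ onto a half-ball $D^+ = D \cap \{y_2 \ge 0\}$, with $\Phi(W \cap \partial\Omega) \subset \{y_2 = 0\}$ and $\partial_\nu$ along $\partial\Omega$ corresponding to $-\partial_{y_2}$; one may additionally shrink $W$ so that $V$ is compactly contained in $W$. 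Given $A$ and $V$, I would construct a smooth $\widetilde\psi$ on $D$ which is \emph{even} in $y_2$, equals $1$ on the mirror-symmetric hull of $\Phi(A)$ across $\{y_2 = 0\}$, vanishes outside the mirror-symmetric hull of $\Phi(V)$, and vanishes near $\partial D$. Evenness of $\widetilde\psi$ in $y_2$ automatically yields $\partial_{y_2}\widetilde\psi = 0$ on $\{y_2 = 0\}$, so that $\psi := \widetilde\psi \circ \Phi$ on $W$, extended by zero on $\Ombar \setminus W$, defines a $C^\infty$-function on $\Ombar$ with $\partial_\nu\psi = 0$ on $\partial\Omega$. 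The derivative estimates survive the bounded $C^2$-change of coordinates (up to a multiplicative constant absorbed into $C_\varphi$), and setting $\varphi := \psi^{2/\eta}$ completes the construction, thereby reproducing the one already carried out in \cite{BlackEtAlPossiblePointsBlowup2023}.
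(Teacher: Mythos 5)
Your construction is correct and is essentially the one the paper relies on: the paper's ``proof'' is just a citation to \cite[Lemma~3.3]{BlackEtAlPossiblePointsBlowup2023}, whose argument (boundary flattening with $\partial_\nu$ matched to $-\partial_{y_2}$, even reflection, then a high power of a smooth bump) you reproduce. One arithmetic slip: with $\alpha=2/\eta$ you have $\alpha(1-2\eta)=\alpha-4$, so $\psi^{\alpha-2}\neq\varphi^{1-2\eta}=\psi^{\alpha-4}$; the estimate nevertheless survives because $0\le\psi\le1$ and $\alpha>4$ give $\psi^{\alpha-2}\le\psi^{\alpha-4}$ (the exponent $\alpha=1/\eta$ would make your identities exact, at the cost of only $\varphi^\eta=\psi$, still $C^2$).
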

\begin{proof}
  This is essentially \cite[Lemma~3.3]{BlackEtAlPossiblePointsBlowup2023}; the additional claim $\varphi^\eta \in C^2(\ol G)$ follows directly from the proof there.
\end{proof}

\section{Spatially global estimates}\label{sec3}
Throughout this section, let
\begin{align}\label{eq:ass_basic}
  \begin{cases}
    \Omega \subset \mathbb R^2 \text{ be a smooth, bounded domain}, \\ 
    \kappa, \mu \in C^0(\Ombar) \text{ with $\mu \ge 0$ in $\Ombar$}, \\
    0 \le u_0 \in C^0(\Ombar),\, 0 \le v_0 \in W^{1, \infty}(\Omega), \, \tmax \in (0, \infty], \\
    (u, v) \in \left( C^{0}(\Ombar \times [0, \tmax)) \cap C^{2, 1}(\Ombar \times (0, \tmax)) \right)^2 \\
    \hphantom{(u, v)}\; \text{be a nonnegative, classical solution of \eqref{P}}.
  \end{cases}
\end{align}

First a~priori estimates for the first solution component are rapidly obtained by integrating the corresponding equation in \eqref{P}.
Unlike in situations where $\kappa$ and $\mu$ are positive constants, however,
the following bounds -- and hence also all subsequent ones -- are necessarily local in time;
for instance, the choices $\kappa \equiv 1$ and $\mu \equiv 0$ are allowed by \eqref{eq:ass_basic} and imply $\intom u(\cdot, t) = \ure^t \intom u_0$ for $t \in (0, \tmax)$.
\begin{lem}\label{lem;L1ForU}
  Assume \eqref{eq:ass_basic}.
  For all $T\in (0,\tmax]\cap (0,\infty)$, there is $C>0$ such that 
  \begin{align}\label{L1ForU}
    \lp{1}{u(\cdot,t)}\le C
    \qquad \text{for all $t\in (0,T)$}
  \end{align}
  and
  \begin{align}\label{L2L2ForU}
    \int_0^T \int_\Omega \mu u^2 \le C.
  \end{align}
\end{lem}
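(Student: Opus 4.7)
The plan is to obtain both estimates from a single a priori computation, namely integrating the first equation of \eqref{P} in space and then also in time. The boundary terms and the taxis-divergence term vanish thanks to the homogeneous Neumann conditions, so no use of $v$ is required at this stage.

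More concretely, I would start by integrating the equation for $u$ over $\Omega$. Writing $\kappa_{\max} \defs \max_{\ol\Omega} \kappa$ (finite by continuity of $\kappa$), the identity
\begin{align*}
  \ddt \intom u(\cdot,t) = \intom \kappa u - \intom \mu u^2 \qquad \text{for all } t \in (0,\tmax)
\end{align*}
immediately follows after using $\int_{\partial\Omega} \partial_\nu u = 0$ and $\int_{\partial\Omega} u \partial_\nu v = 0$.

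For \eqref{L1ForU}, since $\mu \ge 0$ and $u \ge 0$, dropping the quadratic dissipative term yields the differential inequality $\ddt \intom u \le \kappa_{\max} \intom u$, so Gr\"onwall's lemma gives $\intom u(\cdot,t) \le \ure^{\kappa_{\max} T}\intom u_0$ for all $t \in (0,T)$, which is finite because $T < \infty$ and $u_0 \in C^0(\Ombar)$.

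For \eqref{L2L2ForU}, I would integrate the above identity over $(0,T)$ and rearrange to obtain
\begin{align*}
  \int_0^T \intom \mu u^2 = \intom u_0 - \intom u(\cdot,T) + \int_0^T \intom \kappa u \le \intom u_0 + \kappa_{\max} \cdot T \cdot \sup_{t\in(0,T)} \intom u(\cdot,t),
\end{align*}
where the last supremum is finite by \eqref{L1ForU}. I do not anticipate any genuine obstacle: the argument is a direct testing procedure, the only subtlety being the (mild) observation that finite-time dependence of the constants is unavoidable when $\kappa$ is allowed to be positive somewhere with $\mu$ vanishing there.
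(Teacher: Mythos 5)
Your approach is essentially the paper's: integrate the first equation over $\Omega$, drop the nonnegative dissipative term for \eqref{L1ForU} via Gr\"onwall, and recover \eqref{L2L2ForU} from the same identity integrated in time (the paper packages both steps into a single Gr\"onwall argument for $z(t)=\lp{1}{u(\cdot,t)}+\int_0^t \ure^{\kappa_0(t-s)}\io \mu u^2\ds$, but this is only a cosmetic difference). The one point you should repair: since $T$ may equal $\tmax$ and the solution is only defined on $[0,\tmax)$, the quantity $\intom u(\cdot,T)$ in your rearrangement need not exist. Integrate over $(0,t)$ for $t<T$ instead, discard $-\intom u(\cdot,t)\le 0$, and let $t\nearrow T$ using the monotone convergence theorem (the integrand $\mu u^2$ being nonnegative), exactly as the paper does.
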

\begin{proof}
  Integrating the first equation in \eqref{P} shows
  \begin{align*}
    \ddt  \io u = \io \kappa u - \io \mu u^2 
    \le \lp{\infty}{\kappa} \io u - \io \mu u^2 \qquad \text{ in } (0,T),
  \end{align*}
 hence $z(t)\defs \lp{1}{u(\cdot,t)} + \int_0^t \ure^{\|\kappa\|_{L^\infty(\Omega)} (t-s)} \io \mu u^2(\cdot, s) \ds$ solves $z'\le \normj[\Lom\infty]{κ} z$ in $(0,T)$, so that 
  \begin{align}\label{ineq;forL1}
    \pe  \lp{1}{u(\cdot,t)} + \int_0^t \io \mu u^2 \le z(t)
    \le  \lp{1}{u_0} \ure^{\lp{\infty}{\kappa}t}
     \le  \lp{1}{u_0} \ure^{\lp{\infty}{\kappa}T}\sfed C
  \end{align}
  for all $t\in (0,T)$.
  This directly yields \eqref{L1ForU},
  while \eqref{L2L2ForU} follows from \eqref{ineq;forL1} and the monotone convergence theorem upon taking $t\nearrow T$.
\end{proof}
%
On the basis of \eqref{L1ForU}, well-known estimates for the inhomogeneous heat equation yield the following a~priori estimates for $v$.
\begin{lem}\label{lem;EstiForV}
  Assume \eqref{eq:ass_basic} and let $T\in (0,\tmax]\cap (0,\infty)$.
  For all $s\in [1, \infty)$ and all $q \in [1, 2)$, there is $C > 0$ such that
  \begin{align}\label{LpForV}
    \lp{s}{v(\cdot,t)} \le C
    \qquad \text{for all $t \in (0, T)$}
  \end{align}
  and
  \begin{align}\label{LqForNablaV}
    \|\nabla v(\cdot,t)\|_{L^q(\Omega)} \le C
    \qquad \text{for all $t \in (0, T)$}.
  \end{align}
\end{lem}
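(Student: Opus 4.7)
The plan is to apply the Neumann heat semigroup to the representation
\[
  v(\cdot,t) = \ure^{t(\Delta-1)}v_0 + \int_0^t \ure^{(t-s)(\Delta-1)}u(\cdot,s)\ds, \qquad t\in(0,T),
\]
which holds in view of the second equation of \eqref{P} and the assumed regularity of $(u,v)$. The contribution of the initial datum is harmless: since $v_0\in W^{1,\infty}(\Omega)$, the maximum principle and standard estimates for $\ure^{t\Delta}$ with homogeneous Neumann data yield $\|\ure^{t(\Delta-1)}v_0\|_{L^\infty(\Omega)}\le \|v_0\|_{L^\infty(\Omega)}$ and $\|\nabla \ure^{t(\Delta-1)}v_0\|_{L^\infty(\Omega)}\le C\|v_0\|_{W^{1,\infty}(\Omega)}$ for all $t\in(0,T)$, uniformly.

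For the Duhamel term, I would invoke the well-known $L^p$--$L^q$ smoothing estimates of the Neumann heat semigroup on the smooth bounded domain $\Omega\subset\R^2$ (see, e.g., the formulation in \cite[Lemma~1.3]{WinklerBoundednessHigherdimensionalParabolic2010}): there is $C>0$ such that
\[
  \|\ure^{t(\Delta-1)}w\|_{L^s(\Omega)} \le C\bigl(1+t^{-(1-\frac1s)}\bigr)\|w\|_{L^1(\Omega)}
\]
for every $w\in L^1(\Omega)$, $t>0$, $s\in[1,\infty)$, and
\[
  \|\nabla \ure^{t(\Delta-1)}w\|_{L^q(\Omega)} \le C\bigl(1+t^{-\frac32+\frac1q}\bigr)\|w\|_{L^1(\Omega)}
\]
for every $w\in L^1(\Omega)$, $t>0$, $q\in[1,2)$. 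Combining these with the bound $\|u(\cdot,s)\|_{L^1(\Omega)}\le C'$ from Lemma~\ref{lem;L1ForU} reduces the matter to controlling the two scalar integrals
\[
  \int_0^t \bigl(1+(t-s)^{-(1-\frac1s)}\bigr)\ds \qquad \text{and} \qquad \int_0^t \bigl(1+(t-s)^{-\frac32+\frac1q}\bigr)\ds
\]
on $(0,T)$. The singularity exponents are strictly less than $1$ precisely when $s<\infty$ and $q<2$, respectively, so both integrals are bounded uniformly for $t\in(0,T)$ by a constant depending on $T$, $s$, $q$.

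Assembling the pieces then delivers \eqref{LpForV} and \eqref{LqForNablaV}. There is no substantial obstacle: the argument is entirely standard, and the only point worth verifying carefully is that the constants produced by the smoothing estimates can be chosen independently of $t\in(0,T)$, which is the reason $T<\infty$ enters the statement.
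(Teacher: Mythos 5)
Your proof is correct and coincides with the paper's own argument, which simply invokes the Duhamel representation together with the standard $L^1$--$L^s$ (resp.\ $L^1$--$W^{1,q}$) smoothing estimates for the Neumann heat semigroup and the $L^1$ bound on $u$ from Lemma~\ref{lem;L1ForU}; your exponents $-(1-\frac1s)$ and $-\frac32+\frac1q$ and the integrability conditions $s<\infty$, $q<2$ are exactly right for $n=2$. The only cosmetic remark is that the relevant semigroup estimates are those of \cite[Lemma~1.3]{WinklerAggregationVsGlobal2010} rather than the reference you cite.
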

\begin{proof}
  Due to \eqref{L1ForU}, this is a routine application of standard semigroup estimates (as collected in \cite[Lemma~1.3]{WinklerAggregationVsGlobal2010}, for instance).
\end{proof}

\section{Spatially localized estimates}\label{sec4}
Next, we shall gain stronger a~priori estimates near points in $\Ombar$ where $\mu$ is positive, where we can utilise \eqref{L2L2ForU}.
To that end, we henceforth assume that
\begin{align}\label{eq:ass_x0}
  \begin{cases}
    \eqref{eq:ass_basic} \text{ holds}, \\
    T \in (0, \tmax] \cap (0, \infty),\, \tau \in (0, T), \\
    x_0 \in \Ombar \text{ is such that } \mu(x_0) > 0, \\
    d > 0 \text{ and } \mu > \frac{\mu(x_0)}{2} \text{ in } \balla \cap \Ombar, \\
    \balla \cap \Ombar \text{ is contained in the set $W$ given by Lemma~\ref{lem;CutOffF}},
  \end{cases}
\end{align}
where the last two properties can be achieved by choosing $d$ sufficiently small.
 
\subsection{Localized space-time gradient bounds for \tops{$v$}{v}}
Our first spatially local bound is a consequence of maximal Sobolev regularity (in particular Lemma~\ref{lem;MaxSobRegTestver})
and the $L^2$ space-time bound \eqref{L2L2ForU} for the first solution component.
While in the parabolic--elliptic setting considered in \cite{BlackEtAlPossiblePointsBlowup2023}
one immediately obtains $\int_0^T \int_{\{\mu > \mu(x_0)/2\}} |\Delta v-v|^2 = \int_0^T \int_{\{\mu > \mu(x_0)/2\}} u^2 \le C$,
we apparently have to confine ourselves to $L^{2-\eta}$ space-time bounds for $\Delta v$ with $\eta>0$, 
at least at this point in the proof. Moreover, these bounds may still depend on all data fixed in \eqref{eq:ass_x0} and \eqref{eq:ass_basic}, in particular on $x_0$, $τ$ and $T$.
\begin{lem}\label{lm:Delta_v_local}
  Assume \eqref{eq:ass_x0} and let $r \in (1, 2)$.
  Then
  \begin{align}\label{eq:Delta_v_local:statement}
    \int_{\tau}^T \int_{\ballb \cap \Omega} |\Delta v|^r < \infty.
  \end{align}
\end{lem}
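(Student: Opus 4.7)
The plan is to apply Lemma~\ref{lem;MaxSobRegTestver} to $v$ (which solves \eqref{prob:w} with source $f=u$) with a suitable cutoff function supported in $\balla\cap\Ombar$ (so we can exploit the localized $L^2$-bound for $u$) and $\varphi\equiv 1$ on $\ballb\cap\Ombar$. One immediate issue is that $v_0\in W^{1,\infty}(\Omega)$ need not belong to $W_N^{2,r}(\Omega)$, which is required for Lemma~\ref{lem;MaxSobRegTestver}; I would circumvent this by time-shifting: setting $\tilde v(\cdot,t)\defs v(\cdot,t+\tau)$ and $\tilde u(\cdot,t)\defs u(\cdot,t+\tau)$ for $t\in[0,T-\tau]$, the regularity assumption $(u,v)\in C^{2,1}(\Ombar\times(0,\tmax))^2$ ensures $v(\cdot,\tau)\in C^2(\Ombar)$ (with vanishing normal derivative) so that $\tilde v$ solves \eqref{prob:w} on $(0,T-\tau)$ with smooth, admissible initial data.

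For the cutoff, I would invoke Lemma~\ref{lem;CutOffF}: by \eqref{eq:ass_x0} we have $\balla\cap\Ombar\subset W$, so applied to $A\defs \overline{\ballb\cap\Omega}$ and $V\defs\ballc\cap\Ombar$ (an open neighbourhood of $A$ in $W$) with some fixed $\eta\in(0,\tfrac12)$, the lemma produces $\varphi\in C^2(\Ombar)$ with $\varphi\equiv 1$ on $\ballb\cap\Ombar$, $\mathrm{supp}\,\varphi\subset\ballc\cap\Ombar\subset\balla$, $\partial_\nu\varphi=0$, and $|\nabla\varphi|,|\Delta\varphi|$ uniformly bounded. Applying Lemma~\ref{lem;MaxSobRegTestver} with $p=r\in(1,2)$ then gives
\[
\int_0^{T-\tau}\!\!\int_\Omega |\varphi\Delta\tilde v|^r \le \widetilde K(r)\left\{\|\varphi v(\cdot,\tau)\|_{W^{2,r}(\Omega)}^r + \int_0^{T-\tau}\!\!\int_\Omega\Big(|\nabla\varphi\cdot\nabla\tilde v|^r + |\tilde v\Delta\varphi|^r + |\varphi\tilde u|^r\Big)\right\}.
\]
The initial-data term is finite since $v(\cdot,\tau)\in C^2(\Ombar)$; the $\tilde v$ and $\nabla\tilde v$ terms are controlled because $\nabla\varphi$ and $\Delta\varphi$ are bounded and, by Lemma~\ref{lem;EstiForV}, $\|v(\cdot,t)\|_{L^r(\Omega)}$ and $\|\nabla v(\cdot,t)\|_{L^r(\Omega)}$ are bounded in $t\in(0,T)$ (using $r<2$ in the gradient estimate); finally, since $\mathrm{supp}\,\varphi\subset\balla$ where $\mu>\mu(x_0)/2$, Hölder in space-time (again using $r<2$) together with \eqref{L2L2ForU} yields
\[
\int_0^{T-\tau}\!\!\int_\Omega |\varphi\tilde u|^r \le \int_0^T\!\!\int_{\balla\cap\Omega} u^r \le \left(\tfrac{2}{\mu(x_0)}\int_0^T\!\!\int_\Omega \mu u^2\right)^{r/2}(T|\Omega|)^{1-r/2}<\infty.
\]
Since $\varphi\equiv 1$ on $\ballb\cap\Ombar$, the left-hand side dominates $\int_\tau^T\!\int_{\ballb\cap\Omega}|\Delta v|^r$ after shifting time back, establishing \eqref{eq:Delta_v_local:statement}.

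The main obstacle — and the reason the statement is confined to $r<2$ — is the interplay between the two bounds available for $u$ and $\nabla v$: the use of a non-trivial cutoff in Lemma~\ref{lem;MaxSobRegTestver} forces a $|\nabla\varphi\cdot\nabla v|^r$ term on the right-hand side, which at this stage of the argument can only be absorbed using the spatially global $L^q$-gradient estimate from Lemma~\ref{lem;EstiForV} valid exclusively for $q<2$; simultaneously, the localized $L^2$ bound for $u$ on $\balla\cap\Ombar$ is the strongest pointwise-in-time information one has on $u$ thus far, and transferring it to $L^r$ for $r<2$ is where Hölder costs us exactly the threshold at $r=2$. Any sharper statement would require first improving the gradient estimate for $v$, which is precisely the content of subsequent lemmata.
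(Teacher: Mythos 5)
Your argument is correct and essentially the same as the paper's: the paper likewise applies Lemma~\ref{lem;MaxSobRegTestver} on $(\tau,T)$ with initial datum $v(\cdot,\tau)$ (which is exactly your time-shift, justified by the $C^{2,1}$ regularity and $\partial_\nu v=0$), controls the $|v\Delta\varphi|^r$ and $|\nabla\varphi\cdot\nabla v|^r$ terms via Lemma~\ref{lem;EstiForV} using $r<2$, and handles $\int|\varphi u|^r$ through the localized $L^2$ bound coming from \eqref{L2L2ForU}. The one slip is your choice of cutoff sets: with $A\defs\overline{\ballb\cap\Omega}=\overline{B_{4d}(x_0)\cap\Omega}$ the set $V\defs\ballc\cap\Ombar=B_{3d}(x_0)\cap\Ombar$ is \emph{not} an open neighbourhood of $A$ (the radii are the wrong way around); take $V\defs\balla\cap\Ombar$ as the paper does, which changes nothing else in your estimate since you only use $\operatorname{supp}\varphi\subset\balla\cap\Ombar$, where $\mu>\mu(x_0)/2$.
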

\begin{proof}
  We let $A \defs \overline{\ballb} \cap \Ombar$ and $V \defs \balla \cap \Ombar$,
  so that Lemma~\ref{lem;CutOffF} guarantees the existence of a cut-off function $\varphi \in C^2(\Ombar)$ obeying  \eqref{Condi;testfunction}.
  According to Lemma~\ref{lem;MaxSobRegTestver}, we can thus find $c_1=\Ktilde(r)>0$, and from the regularity of $φ$ and $v(\cdot,τ)$ according to \eqref{eq:ass_basic} consequently have $c_2 > 0$, satisfying
  \begin{align*}
    \int_{\tau}^T \int_{A} |\Delta v|^r 
    & =  \int_{\tau}^T \int_{A} |\varphi \Delta v|^r 
     \le \int_{\tau}^T \int_{\Omega} |\varphi \Delta v|^r 
  \\
  &\le c_1 
    \left\{ 
      \|\varphi v(\cdot,{\tau})\|_{W^{2,r}(\Omega)}^r 
      + \int_{\tau}^T \int_\Omega \left( |\nabla \varphi \cdot \nabla v|^r + |v\Delta \varphi|^r + |\varphi u|^r\right)
    \right\}
  \\
  &\le c_2 \left\{ 
    1+ \int_{\tau}^T \int_\Omega |\nabla v|^r + \int_{\tau}^T \int_\Omega |v|^r + \int_{\tau}^T \int_\Omega |\varphi u|^r
  \right\}.
  \end{align*}
  The first two integrals on the right-hand side are finite by \eqref{LqForNablaV} and \eqref{LpForV}, respectively.
  For the remaining one, we make use of the defining property of $d$ in \eqref{eq:ass_x0} and Lemma~\ref{lem;L1ForU}, which show
  \begin{align*}
        \int_{\tau}^T \int_\Omega |\varphi u|^2
    \le \frac{2}{\mu(x_0)} \cdot \frac{\mu(x_0)}{2} \int_\tau^T \int_{V} u^2
    \le \frac{2}{\mu(x_0)} \int_\tau^T \io \mu u^2
    <   \infty.
  \end{align*}
  Thus we obtain \eqref{eq:Delta_v_local:statement}.
\end{proof}

By interpolating between \eqref{LqForNablaV} and \eqref{eq:Delta_v_local:statement} and applying another cutoff argument,
we obtain spatially local space-time bounds for $\nabla v$.
\begin{lem}\label{lem;L4-epEstiForV}
  Assume \eqref{eq:ass_x0}.
  For all $\alpha\in [1,4)$,
  \begin{align}\label{eq:L4-epEstiForV:statement}
    \int_{\tau}^T \int_{\ballc\cap \Omega} |\nabla v|^\alpha < \infty.
  \end{align}
\end{lem}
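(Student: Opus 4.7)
The plan is to reduce the desired local bound to a global estimate for $\nabla w$, where $w\defs\varphi v$ for an appropriate cut-off $\varphi$, and then to interpolate between the space-time $L^r$ bound for $\Delta w$ supplied by Lemma~\ref{lm:Delta_v_local} and the pointwise-in-time spatial $L^q$ bound for $\nabla w$ supplied by Lemma~\ref{lem;EstiForV}. Specifically, I would apply Lemma~\ref{lem;CutOffF} with $A=\overline{\ballc}\cap\Ombar$ and $V=\ballb\cap\Ombar$ to obtain $\varphi\in C^2(\Ombar)$ with $\varphi\equiv 1$ on $A$, $\varphi\equiv 0$ on $\Ombar\setminus V$, and $\partial_\nu\varphi=0$ on $\partial\Omega$; since $\varphi\equiv 1$ on an open set containing $\ballc\cap\Omega$, $\nabla\varphi$ vanishes there, so $\nabla w=\nabla v$ throughout $\ballc\cap\Omega$. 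Therefore $\int_\tau^T\int_{\ballc\cap\Omega}|\nabla v|^\alpha\le\int_\tau^T\int_\Omega|\nabla w|^\alpha$, and it suffices to bound the right-hand side.

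For any fixed $r\in(1,2)$, the identity $\Delta w=\varphi\Delta v+2\nabla\varphi\cdot\nabla v+v\Delta\varphi$ shows, via Lemma~\ref{lm:Delta_v_local} (for the first term, using that $\varphi$ vanishes outside $\ballb\cap\Omega$) and Lemma~\ref{lem;EstiForV} combined with the boundedness of $\nabla\varphi,\Delta\varphi$ (for the other two), that $\Delta w\in L^r((\tau,T);L^r(\Omega))$. Since $\partial_\nu w=0$ on $\partial\Omega$ and $w\in L^\infty((\tau,T);L^r(\Omega))$ by Lemma~\ref{lem;EstiForV}, standard elliptic regularity for the Neumann Laplacian on $\Omega$ then yields $w\in L^r((\tau,T);W^{2,r}(\Omega))$; and the two-dimensional Sobolev embedding $W^{2,r}(\Omega)\hookrightarrow W^{1,\alpha_*}(\Omega)$ with $\alpha_*\defs\tfrac{2r}{2-r}$ finally provides $\nabla w\in L^r((\tau,T);L^{\alpha_*}(\Omega))$, where $\alpha_*$ can be made arbitrarily large by choosing $r$ close to $2$.

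Lemma~\ref{lem;EstiForV} also furnishes $\nabla w\in L^\infty((\tau,T);L^q(\Omega))$ for every $q\in[1,2)$. Given $\alpha\in[1,4)$, I would pick $q\in(\max\{1,\alpha-2\},2)$ and then $r\in(1,2)$ close enough to $2$ that the interpolation exponent $\lambda\in(0,1)$ determined by $\tfrac1\alpha=\tfrac{1-\lambda}q+\tfrac\lambda{\alpha_*}$ satisfies $\alpha\lambda\le r$; this is feasible because $\alpha\lambda\to\alpha-q<2$ as $\alpha_*\to\infty$. Combining the pointwise-in-time H\"older interpolation
\[
\|\nabla w\|_{L^\alpha(\Omega)}^\alpha\le\|\nabla w\|_{L^q(\Omega)}^{\alpha(1-\lambda)}\|\nabla w\|_{L^{\alpha_*}(\Omega)}^{\alpha\lambda}
\]
with a H\"older inequality in time (with exponents $r/(\alpha\lambda)$ and its conjugate) then yields $\int_\tau^T\int_\Omega|\nabla w|^\alpha<\infty$, as desired. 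The main obstacle is the bookkeeping of exponents: both Lemma~\ref{lem;EstiForV} and Lemma~\ref{lm:Delta_v_local} narrowly miss the critical exponent $2$, and the small losses in each interpolation step compound to prevent us from reaching $\alpha=4$.
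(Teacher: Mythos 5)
Your proposal is correct and takes essentially the same route as the paper: both reduce the claim to $\varphi v$ with the same cutoff, feed the local $L^r$ bound on $\Delta v$ from Lemma~\ref{lm:Delta_v_local} and the global subcritical bounds of Lemma~\ref{lem;EstiForV} into elliptic regularity, and interpolate between the resulting second-order information and the first-order bounds -- the paper merely packages your Sobolev-embedding-plus-H\"older step as a single Gagliardo--Nirenberg inequality with $r=\sqrt{1+2\alpha}-1$ chosen so that $\theta\alpha=r$ exactly, which dispenses with your extra H\"older inequality in time. The only loose end is that for $\alpha<2$ your prescribed $q$ may exceed $\alpha$ so the spatial interpolation is not available as written, but that case is immediate from \eqref{LqForNablaV}.
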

\begin{proof}
Without loss of generality, we assume $α>\f32$.
  According to Lemma~\ref{lem;CutOffF}, there exists $\varphi \in C^2(\Ombar)$ fulfilling \eqref{Condi;testfunction}
  with $A \defs \overline{\ballc} \cap \Ombar$ and $V \defs \ballb \cap \Ombar$.
  The definitions $r \defs \sqrt{1+2\alpha} - 1 \in (1, 2)$ and $\theta \defs \frac{\frac1r - \frac1\alpha}{\frac12 - \frac1r + \frac1r} = \frac{2\alpha - 2r}{\alpha r} \in (0, 1)$
  imply $\theta \alpha = r$.
  By the Gagliardo--Nirenberg inequality and
  elliptic regularity (cf.\ \cite[Theorem~19.1]{FriedmanPartialDifferentialEquations1976}; note that $\partial_\nu (\varphi v) = 0$ on $\partial V \times (\tau, T)$ by \eqref{P} and \eqref{Condi;testfunction}),
  there are $c_1, c_2 > 0$ such that
  \begin{align*}
          \|\nabla (\varphi v) \|_{L^\alpha(V)}^\alpha
    &\le  c_1 \|D^2 (\varphi v) \|_{L^r(V)}^{\theta \alpha} \|\nabla (\varphi v) \|_{L^r(V)}^{(1-\theta)\alpha}
          + c_1 \|\nabla (\varphi v) \|_{L^r(V)}^{\alpha} \\
    &\le  c_2 \left(\|\Delta (\varphi v) \|_{L^r(V)}^{r} +\normj[L^r(V)]{φv}^r \right)\|\nabla (\varphi v) \|_{L^r(V)}^{(1-\theta)\alpha}
          + c_1 \|\nabla (\varphi v) \|_{L^r(V)}^{\alpha}
  \end{align*}
  in $(\tau, T)$.
  Since $\varphi \in C^2(\Ombar)$
  and the expressions $\int_{\tau}^T \int_V |\Delta v|^r$, $\sup_{t \in (\tau, T)} \intom |\nabla v|^r$ and $\sup_{t \in (\tau, T)} \intom |v|^r$
  are finite by \eqref{eq:Delta_v_local:statement}, \eqref{LqForNablaV} and \eqref{LpForV}, respectively,
  we conclude the existence of $c_3 > 0$ with $\int_{\tau}^T \intom |\nabla (\varphi v)|^\alpha \le c_3$.
  As $\varphi = 1$ in $A$, we obtain \eqref{eq:L4-epEstiForV:statement}.
\end{proof}

\subsection{The functional \tops{$\intom \varphi_p u^p$}{int phi u\^{}p}}
Our next goal is to bound $\intom \varphi_p u^p$ for sufficiently small $p > 1$ and a suitable cutoff function $\varphi_p$.
To ensure that this is indeed possible, we fix $p$ and $\varphi_p$ already at this stage and consider the functional only for these choices.

To that end, knowledge regarding the dependency of the constants in Lemma~\ref{lem;MaxSobRegTestver} on $p$ is indispensable.
Lemma~\ref{lem;MaxSobRegTestver} asserts that the function $\widetilde{K}$ given there is continuous at $2$, so that we can find 
\begin{equation}\label{choices}
  \eps \in \left(0, \frac{\mu(x_0)}{8}\right)
  \text{ and }
  p \in (1, 2)
  \quad \text{such that} \quad
  \frac{\mu(x_0)}{2} - 4\ep
  - \frac{(p-1) \ure^{p \kappa_0 T} \widetilde{K}(p+1)}{p \ep^p} \ge 0,
\end{equation}
where $\kappa_0 \defs \|\kappa\|_{L^\infty(\Omega)}$,
and then let (with $d$ as in \eqref{eq:ass_x0})
\begin{equation}\label{eq:varphip}
  \begin{cases}
    \varphi_p \text{ be a function given by Lemma~\ref{lem;CutOffF}} \\
    \hphantom{\varphi_p} \text{ with } A \defs \ol{\balld} \cap \Ombar,\, V \defs \ballc \cap \ol{\Omega} \text{ and } \eta \defs \frac{1}{2(p+1)}.
  \end{cases}
\end{equation}

\begin{lem}\label{lem;DifIneqforU}
  Assume \eqref{eq:ass_x0}, \eqref{choices} and \eqref{eq:varphip}.
  Then there is $C>0$ such that 
  \begin{align}\label{eq:DifIneqforU:ineq}
    \frac 1p  \ddt  \io \phip u^p &\le 
    \lp{\infty}{\kappa} \io \phip u^p  
    + \frac{p-1}{p\ep^p} \io \phip |\Delta v|^{p+1} 
    -  \io (\mu - 4\ep) \phip u^{p+1} +C
  \end{align}
  in $(0, T)$.
\end{lem}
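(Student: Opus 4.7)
I would test the first equation in \eqref{P} against $p\phip u^{p-1}$ and integrate over $\Om$. Integrating by parts (using $\partial_\nu u = \partial_\nu v = \partial_\nu\phip = 0$ to discard all boundary contributions) on the diffusion term, on the chemotaxis term and once more on $\frac{p-1}{p}\io\phip\nabla(u^p)\cdot\nabla v$ should yield the identity
\begin{align*}
  \frac 1p \ddt \io \phip u^p
  &= \frac 1p \io u^p\Delta\phip
     - (p-1)\io \phip u^{p-2}|\nabla u|^2
     + \frac 1p \io u^p \nabla\phip\cdot\nabla v \\
  &\quad -\frac{p-1}{p}\io u^p\phip\Delta v
     + \io\kappa\phip u^p
     - \io\mu\phip u^{p+1}.
\end{align*}
The $\kappa$- and $\mu$-summands already appear in \eqref{eq:DifIneqforU:ineq}, so the task reduces to bounding the remaining four terms by $\frac{p-1}{p\eps^p}\io\phip|\Delta v|^{p+1}$, at most $4\eps\io\phip u^{p+1}$, and a time-independent constant.

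\textbf{The $\Delta v$ and $\Delta\phip$ terms.} I would split $u^p\phip|\Delta v| = (u^p\phip^{p/(p+1)})(\phip^{1/(p+1)}|\Delta v|)$ and apply scaled Young's inequality with conjugate exponents $\frac{p+1}{p}$ and $p+1$, selecting the scaling parameter so that the coefficient of $\io\phip|\Delta v|^{p+1}$ comes out as exactly $\frac{p-1}{p\eps^p}$. A direct computation shows that the simultaneously arising coefficient on $\io\phip u^{p+1}$ is $(p-1)\eps/(p+1)^{1+1/p}$, which for $p\in(1,2)$ lies below $\eps$. For $\frac 1p \io u^p\Delta\phip$, the choice $\eta=\frac{1}{2(p+1)}$ in \eqref{eq:varphip} yields $1-2\eta = \frac{p}{p+1}$ and thus $|\Delta\phip|\le C_\phip\phip^{p/(p+1)}$ by \eqref{eq:cutoff:D_varphi_est}. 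Hence $u^p|\Delta\phip|\le C_\phip(\phip u^{p+1})^{p/(p+1)}$, and after Hölder's inequality (with finite support of $\phip$) the scalar Young inequality $y^{p/(p+1)}\le\tilde\delta y + C(\tilde\delta)$ produces $\eps\io\phip u^{p+1}$ plus a finite constant.

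\textbf{The $\nabla\phip\cdot\nabla v$ term --- the main obstacle.} Lemma~\ref{lem;L4-epEstiForV} provides only space-time (not pointwise-in-time) bounds for $\nabla v$, so a direct Young estimate would leave a non-constant residue. My key trick is to integrate by parts once more (again using $\partial_\nu\phip=0$) to trade $\nabla v$ for $v$:
\[
  \io u^p\nabla\phip\cdot\nabla v = -p\io u^{p-1}v\nabla u\cdot\nabla\phip - \io u^p v\Delta\phip.
\]
For the first integrand, AM--GM applied to $(u^{(p-2)/2}\phip^{1/2}|\nabla u|)(u^{p/2}v|\nabla\phip|\phip^{-1/2})$ with AM--GM constant chosen as $p-1$ produces exactly $(p-1)\io\phip u^{p-2}|\nabla u|^2$, which cancels the dissipation term, plus $\frac{1}{4(p-1)}\io u^p v^2 |\nabla\phip|^2/\phip$. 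Since $|\nabla\phip|^2/\phip\le C_\phip^2\phip^{1-2\eta}=C_\phip^2\phip^{p/(p+1)}$, Young's inequality on $(u^p\phip^{p/(p+1)})v^2$ (with conjugate exponents $\frac{p+1}{p}, p+1$) bounds the remainder by $\eps\io\phip u^{p+1} + C\,\lp{2(p+1)}{v}^{2(p+1)}$. The second summand $-\frac 1p \io u^p v\Delta\phip$ is handled analogously via $|\Delta\phip|\le C_\phip\phip^{p/(p+1)}$, producing $\eps\io\phip u^{p+1} + C\,\lp{p+1}{v}^{p+1}$. Both residual $v$-integrals are bounded uniformly in time thanks to Lemma~\ref{lem;EstiForV} and thus merely contribute to the final constant $C$. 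Summing the four estimates, the total positive coefficient in front of $\io\phip u^{p+1}$ is at most $4\eps$, which combined with $-\io\mu\phip u^{p+1}$ yields $-\io(\mu-4\eps)\phip u^{p+1}$ and completes \eqref{eq:DifIneqforU:ineq}.
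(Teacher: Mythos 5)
Your proposal is correct and follows essentially the same route as the paper's proof: the same testing identity, the same second integration by parts turning $\io u^p\nabla\phip\cdot\nabla v$ into terms involving $v$ rather than $\nabla v$ (with the resulting $(p-1)\io\phip u^{p-2}|\nabla u|^2$ absorbed by the dissipation term), the same use of $\eta=\tfrac{1}{2(p+1)}$ with \eqref{eq:cutoff:D_varphi_est} and Lemma~\ref{lem;EstiForV} to control the residual $v$-integrals, and the same $4\eps$ bookkeeping. The only cosmetic difference is in the Young inequality for the $\Delta v$ term, where you fix the coefficient of $\io\phip|\Delta v|^{p+1}$ at $\tfrac{p-1}{p\eps^p}$ and check the companion coefficient is below $\eps$, whereas the paper fixes the latter at $\eps$ and bounds the former.
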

\begin{proof}
  We proceed as in \cite[Lemma~4.1]{BlackEtAlPossiblePointsBlowup2023}, except that we are unable to replace $-\Delta v$ by $u - v$.
  Testing the first equation in \eqref{P} with $\varphi u^{p-1}$
  and integrating by parts multiple times (the boundary terms vanish as $∂_{ν} φ_p=0=∂_{ν} u = ∂_{ν}v$ 
  on $\partial \Omega$ by \eqref{P} and \eqref{Condi;testfunction}) give
  \begin{align*}
    \frac 1p \ddt \io \phip u^p 
    & = 
    -(p-1) \io \phip u^{p-2}|\nabla u|^2 + \frac 1p \io u^p \Delta \phip 
    - \left(1-\frac 1p \right) \io \phip u^p \Delta v 
    \\ 
    &\pe
    + \frac 1p \io u^p \nabla \phip \cdot \nabla v 
    + \io \kappa \phip u^p - \io \mu \phip u^{p+1} 
  \end{align*}
  in $(0, T)$. 
  Another integration by parts and Young's inequality show
  \begin{align}\label{eq:diff_ineq_u:2}
          \frac 1p \io u^p \nabla \phip \cdot \nabla v 
    &=    - \io u^{p-1} v \nabla u \cdot \nabla \phip 
          - \frac 1p \io u^p v \Delta \phip \notag \\
    &\le  (p-1) \io \phip u^{p-2} |\nabla u|^2
          + \ep \io \phip u^{p+1}
          + c_1 \io \frac{|\nabla \phip|^{2(p+1)}}{\phip^{2p+1}}v^{2(p+1)} \notag \\
    &\pe  + \ep \io \phip u^{p+1}
          + c_1 \io \frac{|\Delta \phip|^{p+1}}{\phip^p}v^{p+1}
  \end{align}
  in $(0, T)$ for some $c_1 > 0$.
  Since $\eta = \frac{1}{2(p+1)}$ by \eqref{eq:varphip},
  \eqref{eq:cutoff:D_varphi_est} implies $|\nabla \phip|^{2(p+1)} \le C_\varphi^{2(p+1)} \phip^{2(p+1)-1}$ and $|\Delta \phip|^{p+1} \le C_\varphi^{p+1}  \phip^{p+1-1}$ in $\Ombar$.
  Together with \eqref{LpForV} this entails boundedness of the third and last integral on the right-hand side in \eqref{eq:diff_ineq_u:2} in $(0, T)$,
  whence there is $c_2 > 0$ such that
  \begin{align}\label{eq:diff_ineq_u:3}
    \frac 1p \ddt \io \phip u^p 
    & \le
    \|\kappa\|_{L^\infty(\Omega)} \io \phip u^p
    + \frac 1p \io u^p \Delta \phip 
    - \left(1-\frac 1p \right) \io \phip u^p \Delta v \notag \\
    &\pe
    - \io (\mu - 2\eps) \phip u^{p+1} 
    + c_2
  \end{align}
  in $(0, T)$. 
  By two additional applications of Young's inequality, we further obtain
  \[
    \frac 1p \io u^p \Delta \phip \le  \ep\io \phip u^{p+1} + c_3
  \] 
  and
  \begin{align*}
    - \left(1-\frac 1p \right) \io \phip u^p \Delta v 
    &\le \ep \io \phip u^{p+1} + 
    \frac{p^p}{\ep^p (p+1)^{p+1}}\left(\frac{p-1}{p}\right)^{p+1}\io \phip |\Delta v|^{p+1} \\
    &\le \ep \io \phip u^{p+1} + 
    \frac{p-1}{p\eps^p} \io \phip |\Delta v|^{p+1}
  \end{align*}
  in $(0,T)$ for some $c_3 > 0$,
  which when combined with \eqref{eq:diff_ineq_u:3} yield \eqref{eq:DifIneqforU:ineq} for some $C > 0$.
\end{proof}
%

The second term on the right-hand side in an integrated version of \eqref{eq:DifIneqforU:ineq} can be absorbed by the last one therein
due to maximal Sobolev regularity (Lemma~\ref{lem;MaxSobRegTestver}) and our choices of $p$ and $\eps$ in \eqref{choices}.
Thus, we obtain the following.
\begin{lem}\label{lem;LpForU}
  Assume \eqref{eq:ass_x0}, \eqref{choices} and \eqref{eq:varphip}.
  Then there exists $C > 0$ such that
  \[
    \io \phip u^p(\cdot, t) \le C
    \qquad \text{for all $t \in (\tau, T)$}.
  \]
\end{lem}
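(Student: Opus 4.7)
The plan is to integrate \eqref{eq:DifIneqforU:ineq} in time, control the $|\Delta v|^{p+1}$-integral on its right-hand side via maximal Sobolev regularity (Lemma~\ref{lem;MaxSobRegTestver}) applied to the $v$-equation, and then absorb the resulting $\phip u^{p+1}$-term into the dissipative contribution from $\mu$ by virtue of the smallness condition \eqref{choices}.

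Concretely, I would pick the auxiliary cutoff $\psi \defs \phip^{1/(p+1)}$. Since $\eta = \frac{1}{2(p+1)}$ in \eqref{eq:varphip}, Lemma~\ref{lem;CutOffF} guarantees $\phip^{1/(2(p+1))} \in C^2(\Ombar)$, hence $\psi = (\phip^{1/(2(p+1))})^2 \in C^2(\Ombar)$ with $\partial_\nu \psi = 0$ on $\partial \Omega$; a direct computation based on \eqref{eq:cutoff:D_varphi_est} (with the choice $\eta = \frac{1}{2(p+1)}$) yields uniform bounds for $|\nabla \psi|$ and $|\Delta \psi|$ on $\Ombar$. Because $\psi^{p+1} = \phip$, applying Lemma~\ref{lem;MaxSobRegTestver} with exponent $p+1$ to the $v$-equation on $(\tau, T)$ with initial datum $v(\cdot,\tau) \in C^2(\Ombar)$ (cf.\ \eqref{eq:ass_basic}) produces
\[
  \int_\tau^t \io \phip |\Delta v|^{p+1}
  \le \Ktilde(p+1) \Bigl\{ \|\psi v(\cdot,\tau)\|_{W^{2,p+1}(\Omega)}^{p+1}
    + \int_\tau^t \io \bigl( |\nabla\psi\cdot\nabla v|^{p+1} + |v\Delta\psi|^{p+1} + \phip u^{p+1} \bigr) \Bigr\}.
\]
Since $\mathrm{supp}(\nabla\psi) \subseteq \ol{\ballc} \cap \Ombar$, on which Lemma~\ref{lem;L4-epEstiForV} is applicable with $\alpha = p+1 < 4$, the gradient contribution is bounded; the $|v\Delta\psi|^{p+1}$-integral is bounded by Lemma~\ref{lem;EstiForV}; and the initial term is bounded by the $C^2$-regularity of $v(\cdot,\tau)$ and $\psi$. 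Thus for some $c_1 > 0$,
\[
  \int_\tau^t \io \phip |\Delta v|^{p+1} \le c_1 + \Ktilde(p+1) \int_\tau^t \io \phip u^{p+1} \qquad \text{for all } t \in (\tau, T).
\]

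Setting $y(t) \defs \io \phip u^p(\cdot,t)$ and using $\mu > \mu(x_0)/2$ on $\mathrm{supp}(\phip)$ (cf.\ \eqref{eq:ass_x0}), I would rewrite \eqref{eq:DifIneqforU:ineq} as $(\ure^{-p\kappa_0 t} y)'(t) \le \ure^{-p\kappa_0 t} F(t)$ with $F(t) \defs \tfrac{p-1}{\varepsilon^p} \io \phip |\Delta v|^{p+1} - p \io (\mu - 4\varepsilon) \phip u^{p+1} + pC$. Integrating from $\tau$ to $t$, splitting $F$ into positive and negative parts, and using $\ure^{p\kappa_0(t-s)} \le \ure^{p\kappa_0 T}$ on the positive part while $\ure^{p\kappa_0(t-s)} \ge 1$ on the negative part gives
\[
  y(t) \le \ure^{p\kappa_0 T} \Bigl[ y(\tau) + \tfrac{p-1}{\varepsilon^p} \int_\tau^t \io \phip |\Delta v|^{p+1} + pCT \Bigr]
         - p \Bigl( \tfrac{\mu(x_0)}{2} - 4\varepsilon \Bigr) \int_\tau^t \io \phip u^{p+1}.
\]
Substituting the previous estimate, the coefficient of $\int_\tau^t \io \phip u^{p+1}$ becomes $\tfrac{(p-1) \ure^{p\kappa_0 T} \Ktilde(p+1)}{\varepsilon^p} - p \bigl( \tfrac{\mu(x_0)}{2} - 4\varepsilon \bigr)$, which is $\le 0$ exactly by \eqref{choices}; dropping this non-positive contribution leaves $y(t) \le C'$ on $(\tau, T)$, as claimed.

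The main technical obstacle is the careful bookkeeping of cutoff supports and exponents: $\psi$ must satisfy $\psi^{p+1} = \phip$ so that $\phip |\Delta v|^{p+1}$ re-appears on the left of the maximal-regularity estimate, $\psi$ must lie in $C^2(\Ombar)$ with $\partial_\nu \psi = 0$ (which is precisely why Lemma~\ref{lem;CutOffF} was stated in the strengthened form with $\varphi^\eta \in C^2$), and $\mathrm{supp}(\nabla\psi)$ must lie inside the ball on which Lemma~\ref{lem;L4-epEstiForV} already controls $\nabla v$. Once these pieces align, the absorption closes precisely because of the delicate choice of $p$ and $\varepsilon$ in \eqref{choices}.
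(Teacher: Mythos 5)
Your proposal is correct and follows essentially the same route as the paper: integrate the differential inequality from Lemma~\ref{lem;DifIneqforU}, apply Lemma~\ref{lem;MaxSobRegTestver} with the cutoff $\phip^{1/(p+1)}$ (whose $C^2$-regularity comes from the strengthened form of Lemma~\ref{lem;CutOffF}) so that $\int\phip|\Delta v|^{p+1}$ is controlled by $\int\phip u^{p+1}$ plus terms bounded via Lemma~\ref{lem;EstiForV} and Lemma~\ref{lem;L4-epEstiForV}, and then absorb using \eqref{choices}. All the support and exponent bookkeeping you describe matches the paper's argument.
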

\begin{proof}
  Integrating \eqref{eq:DifIneqforU:ineq} and abbreviating $\kappa_0 \defs \|\kappa\|_{L^\infty(\Omega)}$ gives
  \begin{align}
          \io \phip u^p(\cdot, t) 
    &\le  \ure^{p \kappa_0 (t-\tau)} \int_\Omega \phip u^p (\cdot,\tau)
          - p \int_\tau^t \ure^{p\kapp (t-s)} \io (\mu - 4\ep)\phip u^{p+1}(\cdot, s) \ds \nn\\
    &\pe  + \f{p-1}{ε^p} \int_\tau^t \ure^{p\kapp (t-s)} \io \phip |\Delta v(\cdot, s)|^{p+1} \ds
          + p c_1 \int_\tau^t \ure^{p \kapp (t-s)} \ds \nn\\
    &\le  \ure^{p \kappa_0 T} \int_\Omega \phip u^p (\cdot,\tau)
          - p \int_\tau^t \io (\mu - 4\ep)\phip u^{p+1}(\cdot, s) \ds \nn\\
    &\pe  + \f{(p-1)\ure^{p \kappa_0 T}}{ε^p} \int_\tau^t \io \phip |\Delta v(\cdot, s)|^{p+1} \ds
          + p c_1 T \ure^{p \kappa_0 T}\label{estimate:phiup}
  \end{align}
  for all $t\in (\tau,T)$ and some $c_1 > 0$, where we have used that, by \eqref{choices} and \eqref{eq:varphip}, $(μ-4ε)φ_p\ge 0$.
  Applying Lemma~\ref{lem;MaxSobRegTestver} (with $\widetilde K$ as in that lemma)
  and noting that $\varphi_p^\frac{1}{p+1} \in C^2(\Ombar)$ by Lemma~\ref{lem;CutOffF}, we have 
  \begin{align}
I\defs          \int_\tau^t \io \phip |\Delta v|^{p+1}
    &\le  \widetilde{K} (p+1) \|\phip^{\frac{1}{p+1}} v(\cdot,\tau)\|_{W^{2,p+1}(\Omega)}^{p+1} \nn\\
    &\pe  + \widetilde{K}(p+1)\int_\tau^t \int_\Omega \left(|\nabla \phip^{\frac{1}{p+1}}\cdot \nabla v|^{p+1} + |v \Delta \phip^{\frac{1}{p+1}}|^{p+1} + |\phip^{\frac{1}{p+1}} u|^{p+1}\right) \nn\\
    &\le  \widetilde{K} (p+1) \|\phip^{\frac{1}{p+1}} v(\cdot,\tau)\|_{W^{2,p+1}(\Omega)}^{p+1}
          + \widetilde{K}(p+1) \int_\tau^t \int_\Omega \phip u^{p+1} \nn\\
    &\pe  + \widetilde{K}(p+1)\int_\tau^T \int_\Omega \left(|\nabla \phip^{\frac{1}{p+1}}\cdot \nabla v|^{p+1} + |v \Delta \phip^{\frac{1}{p+1}}|^{p+1} \right). \nn\\
\intertext{Because 
       $v \in L^{p+1}((0, T); W^{1,p+1}(V))$ with $V \defs \ballc \cap \Ombar$ by Lemma~\ref{lem;EstiForV} and Lemma~\ref{lem;L4-epEstiForV}
   and $\operatorname{supp} \varphi \subseteq V$ by \eqref{eq:varphip} and \eqref{Condi;testfunction}, this yields}
    I &\le  \widetilde{K}(p+1) \int_\tau^t \int_\Omega \phip u^{p+1}  + c_2 \label{estimate:intphideltav}
  \end{align}
  for some $c_2>0$.
  Thanks to \eqref{choices}, by inserting \eqref{estimate:intphideltav} into \eqref{estimate:phiup} we can conclude with some $c_3>0$ that 
  \begin{align*}
          \io \phip u^p(\cdot, t)
    &\le  - p \int_\tau^t \ure^{-p\kapp s} \io \left( \mu - 4\ep - \frac{(p-1) \ure^{p \kappa_0 T} \widetilde{K}(p+1)}{p\ep^p} \right) \phip u^{p+1} 
          + c_3
    \le   c_3
  \end{align*}
  for all $t \in (\tau, T)$. 
\end{proof}

\subsection{Localized \tops{$L^\infty$}{L infty} bounds for \tops{$u$}{u}: proof of Theorem~\ref{mainthm1}}
On the basis of Lemma~\ref{lem;LpForU} and without the need for another cutoff argument, we can obtain higher integrability properties of $\nabla v$ than those already asserted by \eqref{LqForNablaV}.
\begin{lem}\label{lem;LqForV}
  Assume \eqref{eq:ass_x0}.
  Then there are $q > 2$ and $C > 0$ such that
  \begin{align}\label{eq:LqForV:statement}
    \| \nabla v (\cdot,t)\|_{L^q(\balld \cap \Omega)} \le C
    \qquad \text{for all $t \in (\tau, T)$}.
  \end{align}
\end{lem}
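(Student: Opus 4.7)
My plan is to upgrade the uniform-in-time $L^p$ bound on $\phip u$ from Lemma~\ref{lem;LpForU}---together with the space-time $L^{p+1}$ bound that its proof also yields---to spatial $W^{1,q}$-regularity of $v$ near $x_0$ via maximal Sobolev regularity applied to $\phip v$, followed by a parabolic trace embedding and a spatial Sobolev embedding. This re-uses the existing cutoff $\phip$ and avoids introducing any new one.

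The first step is to extract the bound $\int_\tau^T \io \phip u^{p+1} < \infty$ from the proof of Lemma~\ref{lem;LpForU}: the coefficient $\mu - 4\eps - \frac{(p-1)\ure^{p\kappa_0 T}\widetilde{K}(p+1)}{p\eps^p}$ appearing in the dissipation term is in fact strictly positive on $\operatorname{supp}\phip$ (by continuity of $\mu$ on this compact set, by $\mu > \mu(x_0)/2$ there, and by \eqref{choices}). Combined with Lemma~\ref{lem;EstiForV} and with Lemma~\ref{lem;L4-epEstiForV} (which is applicable since $\operatorname{supp}\nabla\phip \subset \ballc$ and $p+1 < 4$), this shows that
\[
  F \defs -2\nabla\phip \cdot \nabla v - v\Delta\phip + \ure^{-(t-\tau)}\Delta(\phip v(\cdot,\tau)) + \phip u
\]
lies in $L^{p+1}(\Omega \times (\tau,T))$.

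Proceeding exactly as in the proof of Lemma~\ref{lem;MaxSobRegTestver}, the function $z \defs (v - \ure^{-(t-\tau)} v(\cdot,\tau)) \phip$ is a classical solution of $z_t = \Delta z - z + F$ on $\Omega \times (\tau,T)$ with $\partial_\nu z = 0$ and $z(\cdot,\tau) = 0$, so Lemma~\ref{lm:ex_w} yields $z \in W^{1,p+1}((\tau,T);L^{p+1}(\Omega)) \cap L^{p+1}((\tau,T);W_N^{2,p+1}(\Omega))$; the smoothness of the remainder $\ure^{-(t-\tau)}\phip v(\cdot,\tau)$ (ensured by \eqref{eq:ass_basic}) then gives $\phip v$ in the same space. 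Applying first the parabolic trace embedding into $C^0([\tau,T]; W^{2-\frac{2}{p+1},p+1}(\Omega))$ and then the spatial Sobolev embedding $W^{2-\frac{2}{p+1},p+1}(\Omega) \hookrightarrow W^{1,q}(\Omega)$ (valid for any $q \le \frac{2(p+1)}{3-p}$) shows that $\phip v \in C^0([\tau,T]; W^{1,q}(\Omega))$. Since $\frac{2(p+1)}{3-p} > 2$ precisely when $p > 1$, a choice $q > 2$ is admissible, and \eqref{eq:LqForV:statement} follows from $\phip \equiv 1$ on $\balld$ (cf.~\eqref{eq:varphip}).

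The most delicate ingredient will be the strict-positivity argument that converts the time-pointwise $L^p$ bound of Lemma~\ref{lem;LpForU} into the space-time $L^{p+1}$ bound on $\phip u$ required for maximal Sobolev regularity. The rest is a routine assembly of standard embeddings, and it is exactly the quantitative improvement $p>1$ over the global $L^1$-bound of Lemma~\ref{lem;L1ForU} that enables the final Sobolev embedding to reach $W^{1,q}$ with $q > 2$.
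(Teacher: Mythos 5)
Your argument is correct, but it follows a genuinely different route from the paper. The paper's proof writes down the equation satisfied by $\phip v$, observes that its forcing $f = \phip u - 2\nabla\phip\cdot\nabla v - v\Delta\phip$ lies in $L^\infty((\tau,T);L^p(\Omega))$ (using only the \emph{stated} conclusion of Lemma~\ref{lem;LpForU} together with the global bound \eqref{LqForNablaV}), and then applies Duhamel's formula with the $L^p$--$L^q$ gradient smoothing estimates for the Neumann heat semigroup, which directly yields a uniform-in-time bound on $\|\nabla(\phip v)\|_{L^q(\Omega)}$ for any $q\in(2,\tfrac{2p}{2-p})$. You instead go back into the \emph{proof} of Lemma~\ref{lem;LpForU} to recover the space-time dissipation bound $\int_\tau^T\io\phip u^{p+1}<\infty$ (your strict-positivity argument for the coefficient on $\operatorname{supp}\phip$ is sound: $\min_{\operatorname{supp}\phip}\mu>\mu(x_0)/2$ by compactness and \eqref{eq:ass_x0}, and \eqref{choices} handles the rest), feed this into maximal Sobolev regularity for $\phip v$, and finish with the temporal trace embedding $\mc W^{p+1,p+1}\hookrightarrow C^0([\tau,T];W^{2-\frac{2}{p+1},p+1}(\Omega))$ and a Sobolev embedding. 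Both work; the paper's version is more economical (elementary semigroup estimates, no need to amend Lemma~\ref{lem;LpForU}), whereas yours stays entirely within the maximal-regularity framework of Section~\ref{sec2} and gives the stronger byproduct $\phip v\in L^{p+1}((\tau,T);W^{2,p+1}(\Omega))$, at the cost of invoking real-interpolation trace theory. Two small points to tighten: the space-time $L^{p+1}$ bound is not part of the statement of Lemma~\ref{lem;LpForU}, so you should record it explicitly as an addendum to that lemma rather than cite it silently; and at the critical exponent $q=\tfrac{2(p+1)}{3-p}$ the embedding $W^{2-\frac{2}{p+1},p+1}(\Omega)\hookrightarrow W^{1,q}(\Omega)$ is borderline for $q>2$, so choose $q$ strictly below it --- harmless, since any $q>2$ suffices for Lemma~\ref{lem;LinftyForU}.
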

\begin{proof}
  We let $p$ and $\phip$ be as in \eqref{choices} and \eqref{eq:varphip}.
  Straightforward calculations show that $\phip v$ solves
  \begin{align*}
    \begin{cases}
      (\phip v)_t = \Delta (\phip v)  - \phip v + \phip u - 2\nabla \phip\cdot \nabla v - v \Delta \phip & \text{in $\Omega\times(\tau,T)$}, \\ 
      \partial_\nu (\phip v) = 0 & \text{on $\partial \Omega\times(\tau,T)$},
   \end{cases}
  \end{align*}
  hence Duhamel's formula entails that 
  \[
      \phip v(\cdot,t)
   =  \ure^{(t-\tau) (\Delta-1)}[\phip v(\cdot, \tau)]
      + \int_\tau^t \ure^{(t-s)(\Delta-1)} f(\cdot, s) \ds
  \]
  for all $t\in (\tau,T)$, where $f \defs \phip u - 2\nabla \phip\cdot \nabla v - v \Delta \phip \in L^\infty((\tau, T); L^p(\Omega))$ by Lemma~\ref{lem;EstiForV} and Lemma~\ref{lem;LpForU}.
  Since $p > 1$ by \eqref{choices}, we may fix $q \in (2, \frac{2p}{2-p})$ and then infer from well-known semigroup estimates (cf.\ \cite[Lemma~1.3]{WinklerAggregationVsGlobal2010}) that with some $c_1, c_2 > 0$, 
  \begin{align*}
          \lp{q}{\nabla (\phip v(\cdot,t))} 
    &\le  \|\nabla \ure^{(t-\tau) (\Delta-1)} (\phip v_0)\|_{L^q(\Omega)}
          + \int_\tau^t \|\nabla \ure^{(t-s) (\Delta-1)} f(\cdot, s)\|_{L^q(\Omega)} \ds \\
    &\le  c_1 \|\nabla (\phip v_0)\|_{L^q(\Omega)}
          + c_1 \|f\|_{L^\infty((\tau, T); L^p(\Omega))} \int_0^\infty (1 + s^{-\frac12 - (\frac1p - \frac1q)}) \ure^{-s} \ds
     \le  c_2
  \end{align*}
  for all $t \in (\tau, T)$ since $-\frac12 - (\frac1p - \frac1q) > - 1$.
  Recalling that $\phip = 1$ in $\balld$, we see that 
  \[
    \|\nabla v(\cdot,t)\|_{L^{q}(\balld \cap \Omega)} 
    = \|\nabla (\phip v(\cdot,t))\|_{L^{q}(\balld \cap \Omega)} 
    \le \lp{q}{\nabla (\phip v(\cdot,t))}
    \le c_2
  \]
  for all $t\in (\tau, T)$.
\end{proof}
%

With Lemma~\ref{lem;LqForV} at hand, we may now apply another semigroup argument to conclude boundedness of $u$ near $x_0$ and hence obtain Theorem~\ref{mainthm1}.
\begin{lem}\label{lem;LinftyForU}
  Assume \eqref{eq:ass_x0}. 
  Then there exists $C > 0$ such that
  \[
    \|u(\cdot, t)\|_{L^\infty(\balle \cap \Omega)} \le C
    \qquad \text{for all $t \in (\tau, T)$}
  \]
\end{lem}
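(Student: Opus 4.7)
The plan is to combine the spatially local $L^p$-bound on $u$ from Lemma~\ref{lem;LpForU} (for the exponent $p > 1$ fixed in \eqref{choices}) with the local $L^q$-bound on $\nabla v$ from Lemma~\ref{lem;LqForV} (for some $q > 2$) through a Duhamel-based semigroup argument applied to a cutoff of $u$, followed by a finite bootstrap in the integrability exponent. Conceptually, this is parallel to the standard $L^p$-to-$L^\infty$ step for chemotaxis systems (cf.\ \cite[Lemma~3.2]{BBTW}), carried out in the present spatially localized setting.

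First, I would apply Lemma~\ref{lem;CutOffF} with $A \defs \overline{\balle} \cap \Ombar$ and $V \defs \balld \cap \Ombar$ to obtain a cutoff $\psi \in C^2(\Ombar)$ with $\psi \equiv 1$ on $\balle \cap \Omega$, $\operatorname{supp}\psi \subseteq \balld \cap \Omega$ and $\partial_\nu \psi = 0$ on $\partial \Omega$. A direct computation starting from \eqref{P} shows that $w \defs \psi u$ satisfies
\begin{align*}
  w_t - \Delta w + w
  &= u \Delta \psi - 2\operatorname{div}(u\nabla\psi) - \operatorname{div}(w\nabla v) + u\nabla\psi\cdot\nabla v \\
  &\quad + (\kappa+1) w - \mu\psi u^2
\end{align*}
in $\Omega \times (\tau, T)$, together with $\partial_\nu w = 0$ on $\partial \Omega$ thanks to $\partial_\nu u = \partial_\nu \psi = 0$. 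I would then use Duhamel's formula with the Neumann heat semigroup $(\ure^{t(\Delta-1)})_{t\ge 0}$, removing the non-positive term $-\mu \psi u^2$ via positivity of the semigroup to arrive at a pointwise upper bound for $w(\cdot,t)$.

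The right-hand side splits into two groups: (i) terms supported in $\operatorname{supp}\nabla\psi \cup \operatorname{supp}\Delta\psi \subseteq \balld \setminus \balle$, which involve only $u$ (not $w$); by Lemma~\ref{lem;LpForU} and Lemma~\ref{lem;LqForV}, these are uniformly bounded in $L^\infty((\tau,T); L^\sigma(\Omega))$ for some fixed $\sigma > 1$ (combining the $L^{p}$-bound for $u$ on $\balld$ with Hölder's inequality for the $u\nabla\psi\cdot\nabla v$ term), and hence contribute a bounded term after applying the standard smoothing estimates for $\ure^{t(\Delta-1)}$ and $\nabla \cdot\, \ure^{t(\Delta-1)}$ (e.g., \cite[Lemma~1.3]{WinklerAggregationVsGlobal2010}); (ii) terms involving $w$ itself, notably the taxis part $-\operatorname{div}(w\nabla v)$, whose $L^{m_k}$-integrability with $\frac{1}{m_k} = \frac{1}{p_k} + \frac{1}{q}$ governs the bootstrap.

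Starting from $w \in L^\infty((\tau,T); L^{p}(\Omega))$ (a consequence of Lemma~\ref{lem;LpForU} since $\phip \equiv 1$ on $\operatorname{supp}\psi$), each iteration of Duhamel applied to $-\operatorname{div}(w\nabla v)$ produces an improved exponent $p_{k+1}$ satisfying $\frac{1}{p_{k+1}} > \frac{1}{p_k} + \frac{1}{q} - \frac{1}{2}$. Since $q > 2$ gives $\frac{1}{2} - \frac{1}{q} > 0$, each step decreases $1/p_k$ by a fixed positive amount, so after finitely many steps $1/p_k \le 0$ and we obtain $w \in L^\infty(\Omega \times (\tau, T))$; because $\psi \equiv 1$ on $\balle \cap \Omega$, this is precisely the asserted bound on $u$. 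The main obstacle lies in carrying out the Duhamel estimates for divergence-form source terms under Neumann boundary conditions, where the kernel singularity $(t-s)^{-\frac12 - \frac{n}{2}(\frac1r - \frac1s)}$ dictates the choice of intermediate exponents, while ensuring that the per-step gain $\frac{1}{2} - \frac{1}{q}$ is large enough for termination in finitely many iterations.
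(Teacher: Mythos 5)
Your overall strategy (cutoff, Duhamel for the Neumann semigroup, dropping $-\mu\psi u^2$ by positivity, and controlling the taxis term $\nabla\cdot(w\nabla v)$ with the local $L^q$ bound on $\nabla v$ from Lemma~\ref{lem;LqForV}) matches the paper's, but your treatment of the cutoff-derivative terms in your group (i) contains a genuine gap that blocks the conclusion. The only integrability available for $u$ on $\balld\setminus\balle$ is the $L^{p}$ bound of Lemma~\ref{lem;LpForU} with the \emph{small} exponent $p\in(1,2)$ fixed in \eqref{choices} (which must be taken close to $1$ there), and the $q$ produced by Lemma~\ref{lem;LqForV} satisfies $q<\frac{2p}{2-p}$. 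Consequently: (a) H\"older applied to $u\nabla\psi\cdot\nabla v$ yields an exponent $m$ with $\frac1m=\frac1p+\frac1q>1$, so this term is \emph{not} bounded in $L^\sigma$ for any $\sigma\ge 1$ by this route; and (b) for the divergence-form term $\nabla\cdot(u\nabla\psi)$ with $u\nabla\psi\in L^\infty((\tau,T);L^{p}(\Omega))$, the smoothing estimate for $\nabla\cdot\,\ure^{t(\Delta-1)}$ carries the singularity $t^{-\frac12-(\frac1p-\frac1r)}$, which is integrable in time only for $\frac1r>\frac1p-\frac12$; since $p<2$ you can never reach $r=\infty$, and these ``fixed data'' terms cap the entire bootstrap at roughly $L^{2}$ no matter how often you iterate on $\nabla\cdot(w\nabla v)$. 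The iteration therefore does not terminate at $L^\infty$ as claimed.

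The paper avoids this by never treating the annulus terms as fixed data. It measures all inhomogeneities in a single $L^\lambda$ norm with $\lambda\in(2,q)$ (so that $\nabla\cdot\,\ure^{t(\Delta-1)}\colon L^\lambda\to L^\infty$ has an integrable singularity) and controls $u|\nabla\varphi|$, $u|\Delta\varphi|$ and $\varphi u|\nabla v|$ by \emph{sublinear powers of the unknown} $\lp{\infty}{\varphi u}$: the cutoff satisfies $|\nabla\varphi|\le C_\varphi\varphi^{1-\eta}$ and $|\Delta\varphi|\le C_\varphi\varphi^{1-2\eta}$ by \eqref{eq:cutoff:D_varphi_est}, whence $\|\varphi^{1-\gamma}u\|_{L^s(\Omega)}\le c\,\lp{\infty}{\varphi u}^{1-\gamma}$ by interpolation against the global $L^1$ bound \eqref{L1ForU} (see \eqref{lastproof}). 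This yields $\sup\lp{\infty}{\varphi u}\le c\,(1+\sup\lp{\infty}{\varphi u}^{\theta})$ with $\theta<1$ in a single step, closed by Young's inequality, so no exponent bootstrap is needed. If you wish to keep your iterative scheme, you would have to work with a finite family of nested cutoffs $\psi_k$ with $\operatorname{supp}\psi_{k+1}\subseteq\{\psi_k=1\}$, so that the annulus data at step $k+1$ inherits the improved $L^{p_k}$ bound from step $k$; with a single cutoff the argument fails.
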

\begin{proof}
  As the proof is only based on the first equation in \eqref{P} and the bound \eqref{eq:LqForV:statement},
  this can be shown exactly as in \cite[Lemma~5.2]{BlackEtAlPossiblePointsBlowup2023} where the parabolic--elliptic analogue of \eqref{P} was treated.
  Nonetheless, we choose to give a short proof for the sake of completeness.

  We let $q > 2$ be as given by Lemma~\ref{lem;LqForV}, $\lambda \in (2, q)$
  and $\varphi$ be as given by Lemma~\ref{lem;CutOffF} with $A \defs \ol{\balle} \cap \Ombar$, $V \defs \balld \cap \Ombar$ and $\eta \defs \min \{\frac 1{2\lambda}, \frac{q - \lambda}{q\lambda}\} < \frac12$. 
  Noting that $\varphi u$ satisfies 
  \begin{align*}
    \begin{cases}
      (\varphi u)_t \le \Delta (\varphi u) - u\Delta \varphi -2\nabla \cdot (u\nabla \varphi) - \nabla \cdot (\varphi u \nabla v) + u\nabla \varphi \cdot \nabla v + \kappa \varphi u & \text{in $\Omega\times(\tau,T)$}, \\
      \partial_\nu (\varphi u) = 0 & \text{on $\partial \Omega\times(\tau,T)$}
   \end{cases}
  \end{align*}
  and that $-\frac12 - (\frac1\lambda - \frac1\infty) > - 1$ holds,
  we obtain from the maximum principle and $L^p$-$L^q$ estimates (for which we again refer to \cite[Lemma~1.3]{WinklerAggregationVsGlobal2010}) that with some $c_1>0$, 
  \begin{align*}
  \lp{\infty}{\varphi u(\cdot,t)} & \le 
    \lp{\infty}{u(\cdot, \tau)} 
    + c_1 \sup_{s\in (\tau, t)} \lp{\lambda}{u(\cdot,s)\Delta \varphi + u(\cdot,s) \nabla \varphi \cdot \nabla v(\cdot,s) + \kappa \varphi u(\cdot,s)} \\ 
    &\pe
    + c_1 \sup_{s\in (\tau, t)} \lp{\lambda}{2u(\cdot,s)\nabla \varphi + \varphi u(\cdot,s) \nabla v(\cdot,s)}
  \end{align*}
  for all $t\in (\tau,T)$. 
  For every $s\in (1,∞)$ and $γ\in(0,\f1s]$ there is $c_2(s,γ)>0$ such that 
  \begin{equation}\label{lastproof}
   \|\varphi^{1-\gamma} u\|_{L^s(\Omega)}\le \normj[\Lom s]{u^{γ}(φu)^{1-γ} }\le \normj[\Lom s]{u^{γ}}\normj[\Lom\infty]{φu}^{1-γ} \le c_2(s, \gamma) \|\varphi u\|_{L^\infty(\Omega)}^{1-\gamma}
  \end{equation}
  for every $t\in(0,T)$, because of \eqref{L1ForU}. By Hölder's inequality, with 
  $C_\varphi$ taken from Lemma~\ref{lem;CutOffF},
  and because of \eqref{lastproof} and Lemma~\ref{lem;LqForV}, for some $c_3>0$ we have
  \begin{align*}
    &\pe \lp{\lambda}{u\Delta \varphi + u \nabla \varphi \cdot \nabla v + \kappa \varphi u} \\ 
    &\le  C_\varphi \lp{\lambda}{\varphi^{1-2\eta} u}
          + C_\varphi \lp{\frac{q\lambda}{q-\lambda}}{\varphi^{1-\eta} u} \|\nabla v \|_{L^q(V)}
          + \lp{\infty}{\kappa}^{1-\frac 1\lambda}\lp{\infty}{\varphi u}^{1-\frac 1\lambda} \lp{1}{\kappa \varphi u}^{\frac 1\lambda} \\ 
    &\le  c_3 \left( \lp{\infty}{\varphi u}^{1-2\eta} + \lp{\infty}{\varphi u}^{1-\eta} + \lp{\infty}{\varphi u}^{1-\frac 1\lambda} \right)
  \end{align*}
  and 
  \begin{align*}
          \lp{\lambda}{2u \nabla \varphi + \varphi u \nabla v } 
    &\le  2C_\varphi \lp{\lambda}{\varphi^{1-\eta} u}
          + \lp{\infty}{\varphi u}^{1-\frac{q-\lambda}{q\lambda}} \lp{1}{\varphi u}^{\frac{q-\lambda}{q\lambda}} \|\nabla v \|_{L^q(V)} \\
    &\le  c_3 \left( \lp{\infty}{\varphi u}^{1-\eta} + \lp{\infty}{\varphi u}^{1-\frac{q-\lambda}{q\lambda}} \right)
  \end{align*}
  in $(\tau, T)$. 
  Therefore, we can find $\theta \in (0,1)$ and $c_4>0$ such that 
  \[
    \sup_{t \in (\tau, T')} \lp{\infty}{\varphi u(\cdot,t)} \le c_4 \left( 1+ \sup_{t\in (\tau, T')}\lp{\infty}{\varphi u(\cdot,t)}^{\theta} \right)
  \]
  for all $T' \in (\tau, T)$, whenceupon the statement follows by Young's inequality and due to $\varphi = 1$ in $A$.
\end{proof}
%
\begin{proof}[Proof of Theorem~\ref{mainthm1}]
  Lemma~\ref{lem;LinftyForU} and the regularity $u \in C^0(\Ombar \times [0, \tau])$ show (i) with $U \defs \balle$, which directly implies (ii).
\end{proof}

\section*{Acknowledgements}
We would like to thank H.\ Meinlschmidt for pointing out the possibility to adapt our earlier proof of continuity of the optimal constant $p\mapsto K(p, p)$ in the maximal Sobolev regularity statement (cf.\ Lemma~\ref{lem;MaxSobReg}) to continuity of $(p, q) \mapsto K(p, q)$.



\def\cprime{$'$}
\footnotesize

\end{document}